\newtheorem{fed}{Definition}[section]
\newtheorem*{fed*}{Definition}
\newtheorem*{feds*}{Definitions}
\newtheorem{teo}[fed]{Theorem}
\newtheorem*{teo*}{Theorem}
\newtheorem{lem}[fed]{Lemma}
\newtheorem{cor}[fed]{Corollary}
\newtheorem{pro}[fed]{Proposition}
\theoremstyle{definition}
\newtheorem{rem}[fed]{Remark}
\newtheorem*{rems*}{Remarks}
\def\coma{\, , \, }
\def\py{\peso{and}}
\newcommand{\peso}[1]{ \quad \text{ #1 } \quad }
\def\Par{\big(\,}
\def\Pal{\big)\,}
\def\n0{n_{ \text{\rm \tiny o}}}
\def\suml{\sum\limits}
\def\QEDP{\tag*{\QED}}
\def\bce{\begin{center}}
\def\ece{\end{center}}
\def\py{\peso{and}}
\def\rk{\text{\rm rk}}
\def\noi{\noindent}
\def\cF{\mathcal F}
\def\cG{\mathcal G}
\def\QED{\hfill $\square$}
\def\EOE{\hfill $\triangle$}
\def\uno{\mathds{1}}
\def\bm{\left[\begin{array}}
\def\em{\end{array}\right]}
\def\ben{\begin{enumerate}}
\def\een{\end{enumerate}}
\def\bit{\begin{itemize}}
\def\eit{\end{itemize}}
\def\barr{\begin{array}}
\def\earr{\end{array}}
\def\igdef{\ \stackrel{\mbox{\tiny{def}}}{=}\ }
\def\precski{\ \stackrel{\mbox{\tiny{Lidskii}}}{\prec}\ }
\def\la{\lambda}
\def\N{\mathbb{N}}
\def\R{\mathbb{R}}
\def\C{\mathbb{C}}
\def\I{\mathbb{I}}
\def\cC{\mathcal{C}}
\def\G{\mathcal{G}}
\def\cH{\mathcal{H}}
\def\cP{\mathcal{P}}
\def\cS{{\cal S}}
\def\cT{{\cal T}}
\def\cM{{\cal M}}
\def\cV{{\cal V}}
\def\cU{{\cal U}}
\def\ese{\mathcal{S}}
\def\ete{\mathcal{T}}
\def\orto{^\perp}
\def\inc{\subseteq}
\def\rai{^{1/2}}
\def\api{\langle}
\def\cpi{\rangle}
\def\ua{^\uparrow}
\def\da{^\downarrow}
\def\spr{\text{\rm Spr}}
\def\sprm{\text{\rm Spr}^+}
 \DeclareMathOperator{\tr}{tr}
\DeclareMathOperator{\leqp}{\leqslant}
\def\H{{\cal H}}
\newcommand{\mat}{\mathcal{M}_d(\mathbb{C})}
\newcommand{\matn}{\mathcal{M}_n(\mathbb{C})}
\newcommand{\matsa}{\mathcal{H}(n)}
\newcommand{\matu}{\mathcal{U}(n)}
\newcommand{\matposn}{\matn^+}
\newcommand{\matrec}[1]{\mathcal{M}_{#1} (\mathbb{C})}
\def\beq{\begin{equation}}
\def\eeq{\end{equation}}
\def\pausa{\medskip\noi}
\newcommand{\lambdup}{\lambda^{\uparrow}}
\def\Ax2{\,( S_{E(\cF)^\#_\cV})\hat{}_x }
\newcommand{\nui}[1]{N(#1)}
\begin{document}

\title{The spectral spread of Hermitian matrices}
\author{Pedro Massey, Demetrio Stojanoff and Sebastián Zárate 
\footnote{Partially supported by 
CONICET (PIP 0152 CO), FONCyT (PICT-2015-1505) and UNLP (11X829) %\hspace{19cm}
 e-mail addresses:  massey@mate.unlp.edu.ar , demetrio@mate.unlp.edu.ar , seb4.zarate@gmail.com  }
\\
{\small Centro de Matem\'atica, FCE-UNLP,  La Plata
and IAM-CONICET, Argentina }\\
{\small To the memory of ``el Diego'' Maradona }}
\date{}
\maketitle

\begin{abstract}
\pausa Let $A$ be an $n\times n$ complex Hermitian matrix and let $\la(A)=(\la_1,\ldots,\la_n)\in \R^n$ denote 
the eigenvalues of $A$, counting multiplicities and arranged in non-increasing order. Motivated by  problems arising in the theory of low rank matrix approximation, we study the spectral spread of $A$, denoted $\sprm(A)$, given by $\sprm(A)=(\la_1-\la_{n},\la_2-\la_{n-1},\ldots, 
\la_{k}-\la_{n-k+1})\in \R^k$, where $k=[n/2]$ (integer part). The spectral spread is a vector-valued measure of dispersion of the spectrum of $A$, that allows one to obtain several submajorization inequalities. In the present work we obtain inequalities that are related to Tao's inequality for anti-diagonal blocks of positive semidefinite matrices, Zhan's inequalities for the singular values of differences of positive semidefinite matrices, extremal properties of direct rotations between subspaces, generalized commutators and distances between matrices in the unitary orbit of a Hermitian matrix. 
\end{abstract}

\noindent  AMS subject classification: 42C15, 15A60.

\noindent Keywords: spectral spread, submajorization, principal angles, generalized commutators.

\tableofcontents

\section{Introduction}
In this work we develop several results related to the spectral spread 
of Hermitian matrices (for its definition see Eq. \eqref{spr+ def intro} below). The study of this notion is motivated by some recent problems related to the absolute variation of Ritz values, 
which is one of the many aspects of low rank matrix approximation of
Hermitian matrices (see \cite{Parlett,Saad}). 
Indeed, given an $n\times n$ complex Hermitian matrix $A$ and a $k$-dimensional subspace $\cS$ of $\C^n$, then the Ritz values of $A$ corresponding to $\cS$ are the eigenvalues (counting multiplicities and arranged in non-increasing order)
$\la(S^*AS)=(\la_i(S^*AS))_{i\in\I_k}\in \R^{k}$, where $S$ is an $n\times k$ isometry with range $\cS$ (here $\I_k=\{1,\ldots, k\}$ is an index set). 
If $\cT\subset \C^n$ is another $k$-dimensional subspace then, the absolute variation
of the Ritz values of $A$ related to $\cS$ and $\cT$ is the vector
$$(|\la_i(S^*AS)-\la_i(T^*AT)|)_{i\in\I_k}\in\R^k$$ where $T$ is an $n\times k$ isometry with range $\cT$. 
This topic has been extensively studied (see 
\cite{AKPRitz,BosDr,AKFEM,AKMaj,AKProxy,LiLi,MSZ1,TeLuLi,ZAK}). One of the major problems in this context is to obtain upper bounds for the variation of the Ritz values in terms of some measure of the spread of the spectrum of $A$ and some measure of the distance between the subspaces $\cS$ and $\cT$. As a natural vector-valued measure of distance between the subspaces it is usually considered $\Theta(\cS,\cT)=(\theta_i)_{i\in\I_k}\in\R^k$ the so-called vector of principal angles between $\cS$ and $\cT$ (for details see Section 
\ref{sec prelis}). On the other hand, as a measure of the spread of the spectrum of 
$A$, many authors have considered the diameter i.e., $\la_{\max} (A)-\la_{\min}(A)\geq 0$.

\pausa In \cite{AKFEM}, A. Knyazev et.al. considered the vector valued measure of the spread of the spectrum of $A$ denoted by $\sprm(A)$ given by 
\beq\label{spr+ def intro}
\sprm (A) =(\la_i(A)-\la_{n-i+1}(A))_{i\in \I_{[n/2]}} 
%=  \big(\spr_i(A)\, \big)_{i\in \I_k}
=\big(\la_i(A)-\la\ua_i(A)\, \big)_{i\in \I_{[n/2]}} \in \R_{\ge 0}^{[n/2]} \, ,
\eeq 
where $\la\ua(A)=(\la\ua_i(A))_{i\in\I_n}$ is the vector of eigenvalues, but arranged in non-decreasing order, and 
$[n/2]$ denotes the integer part of $n/2$. Similarly, we can consider
\beq\label{spr def intro}
\spr (A) =\la(A)+\la(-A) 
= \big(\la_i(A)-\la\ua_i(A)\, \big)_{i\in \I_n} \in \R^n \, .
\eeq
Notice that $\spr(A)\in\R^n$ is a symmetric vector, that is
$\spr_i(A)=-\spr_{n-i+1}(A)$, for $i\in\I_n$.
%$\spr(A)=(\sprm(A),-\,\sprm(A)\ua)$.
 Moreover, using Weyl's inequality 
it turns out that $\spr(A)$ is a vector-valued measure of the diameter of 
the unitary orbit of $A$. With these notions, in \cite{AKFEM} the authors 
conjectured that 
\beq \label{eq intro conjK}
(|\la_i(S^*AS)-\la_i(T^*AT)|)_{i\in\I_k}\prec_w (\sin(\theta_i)\ \sprm_i(A))_{i\in\I_m}
\eeq where the previous inequality is with respect to submajorization and
$m=\min\{k,[n/2]\}$. It is well known that submajorization relations (as that conjectured 
in Eq. \eqref{eq intro conjK}) imply inequalities with respect to arbitrary 
unitarily invariant norms, and tracial inequalities involving convex non-decreasing functions (see Sections \ref{sec prelis} and \ref{sec appendixity} for details).

\pausa In \cite{MSZ1} we obtained some inequalities related to  the variation of 
Ritz values that are weak versions of Eq. \eqref{eq intro conjK}. It turns out that Eq. \eqref{eq intro conjK} encodes some subtle aspects of the spectral spread
$\sprm(A)$ that are still not understood. Indeed, at that time we realized that although natural, the spectral spread seemed not to have been considered in the literature. 
%has not been considered in the literature. 
Thus, on the one hand we consider it is interesting to develop some of its basic features. On the other hand, motivated by the seminal ideas from \cite{AKPRitz,AKFEM,ZAK,ZK} 
in this work we propose some inequalities involving the spectral spread. For example, given the $k$-dimensional 
subspace $\cS\subset \C^n$ and $n\times n$ complex Hermitian matrix $A$ as before, 
if we let $\cT=e^{i\,A}\,\cS$ then numerical experiments supported the submajorization inequality
\beq \label{eq intro a1}
\Theta(\cS\coma \cT)\prec_w \frac 1 2 \, \sprm(A)\,.
\eeq
It turns out that Eq. \eqref{eq intro a1} (that reflects some extremal properties of direct rotations, as introduced by Davis and Kahan in \cite{DavKah}) is equivalent to
the following submajorization inequality: 
if $n = k+r$ and we let 
$A$ be the $n\times n$ complex Hermitian matrix with blocks
\beq\label{eq intro a2}
A =\bm{cc}A_1&B \\B^*&A_2 \em \barr {c}\C^k\\ \C^r\earr \peso{then} 2\,s(B)\prec_w \sprm(A) \ ,
\eeq
where $s(B)=\la(\,(B^*B)^{1/2})\in  \R_{\ge 0}^{r} $ 
denotes the vector of singular values of 
the $k\times r$ matrix $B$, i.e. the eigenvalues of the modulus $|B|=(B^*B)^{1/2}$.

\pausa In this paper we prove Eq. \eqref{eq intro a2} (hence, also Eq. \eqref{eq intro a1}\,), which we consider as a key inequality for the spectral spread; we point out that this inequality is sharp. In turn, Eq. \eqref{eq intro a2} connects our work with Tao's work \cite{Tao}, where he showed that 
for a positive semidefinite matrix $A$ with blocks
\beq \label{eq intro Tao1}
A=\bm{cc}A_1&B \\B^*&A_2 \em \barr {c}\C^k\\ \C^r\earr \peso{it holds that}
2\,s_i(B)\leq s_i(A\oplus A)\peso{for} i\in \I_k\ .
\eeq 
Notice that although 
Eq. \eqref{eq intro a2} provides a spectral relation that is weaker than 
the entry-wise inequalities in Eq. \eqref{eq intro Tao1}, our upper bound 
for positive semidefinite $A$ satisfies
$$
\sprm_i(A)=\la_i(A)-\la_{n-i+1}(A)\leq \la_i(A)\leq 
\la_i(A\oplus A) = s_i(A\oplus A)  \ , \peso{for} i\in\I_{[n/2]} \ . 
$$
For example, in case $A=a\,I$ 
then $B=0$ a fact that 
is reflected by $\sprm(A)=0$, while $s_i(A\oplus A)=a$, for $i\in\I_n$. On the other hand, Eq. \eqref{eq intro a2} is valid in the (general) Hermitian case.
Again, by \cite{AuKitt,Tao} it turns out that our work is connected with Zhan's inequality for the singular values of the difference of positive semidefinite matrices in \cite{Zhan00,Zhan02,Zhan04}.
Motivated by these facts, we show that Eq. \eqref{eq intro a2} is equivalent to the inequality
\beq \label{eq intro a3}
 s(A_1-A_2)\prec_w \sprm(A_1\oplus A_2)
\eeq  for arbitrary complex Hermitian matrices $A_1$ and $A_2$ 
of the same size. In this case, Eq. \eqref{eq intro a3} and Zhan's inequality can be compared in a way similar to the comparison between Eq. \eqref{eq intro a2} and Eq. \eqref{eq intro Tao1}.
We point out that the upper bounds obtained in Eq. \eqref{eq intro a2}
and Eq. \eqref{eq intro a3} are vectors that are invariant 
by translations $M\mapsto M+\la\,I$ for the matrices $M$ involved, 
as opposed to previous upper bounds that are based on singular values, 
i.e. Eq, \eqref{eq intro Tao1} and Zhan's inequality. Since the vectors that are being bounded in these theorems are also invariant under the corresponding translations, we consider that the upper bounds in terms of the spread are particularly well suited in this context.

\pausa
On the other hand, it turns out that Eq. \eqref{eq intro a3} can be extended to the context 
of generalized commutators as follows: given $n\times n$ complex 
matrices $A_1,\,A_2,\,X$ such that $A_1$ and $A_2$ are Hermitian then
\beq \label{eq intro a4}
s(A_1\,X-X\,A_2)\prec_w \big(\, s_i(X)\ \sprm_i(A_1\oplus A_2)\,\big)_{i\in\I_n}\,.
\eeq
This last inequality connects our work with a series of papers dealing 
with (even more general) inequalities for singular values 
of generalized commutators 
\cite{Hirz09,HirKitt10,Kitt,Kitt2008,Kitt09,KittWangDu}. 
We point out that in the previous works, the authors obtain entry-wise 
upper bounds for the singular values of generalized commutators in terms of 
singular values and some measures of the spread of related matrices (among other type of inequalities).
Our results are obtained in terms of weaker submajorization relations, but the upper
bound in Eq. \eqref{eq intro a4} involves the complete list of singular values of $X$ and the full spectral spread of $A_1\oplus A_2$. 
On the other hand, we point out
that Eq. \eqref{eq intro a4} holds for arbitrary Hermitian matrices $A_1$ and $A_2$.

\pausa We point out that Eq. \eqref{eq intro a2} together with some of its equivalent forms allow one to develop inequalities related to  Eq. \eqref{eq intro conjK} (see \cite{MSZ2}). 
In the last section of the paper we show the equivalence of the inequalities in Eqs. \eqref{eq intro a1}, \eqref{eq intro a2}, \eqref{eq intro a3} and \eqref{eq intro a4}.

\section{Spectral spread}

Although natural, the spectral spread of Hermitian matrices 
seems not to have been considered in the literature, after being introduced in \cite{AKFEM}. Thus, we begin with a preliminary section (with notations and basic definitions), and then we present some basic results related to  this notion. After this, we consider a submajorization inequality for the spectral spread that plays a key role in our work. We obtain some consequences of this inequality related to  Zhan's \cite{Zhan00} inequality and Davis-Kahan's notion of direct rotation between subspaces \cite{DavKah} (see also \cite{QZLi}).

\subsection{Preliminaries}\label{sec prelis}

In this section we give the basic notation and definitions that we use throughout our work.  
In the Appendix (Section \ref{sec appendixity}) we state several well known results of Matrix Analysis involving the notions described below. 

\pausa
{\bf Notation and terminology}. We let $\mathcal{M}_{n,k}(\C)$ be the space of complex $n\times k$ matrices 
and write $\mathcal{M}_{n,n}(\C)=\matn$ for the algebra of $n\times n$ complex matrices. 
We denote by $\H(n)\subset \matn$ the real subspace of Hermitian matrices and by $\matn^+$, the cone of
positive semi-definite matrices. Also, 
$\cG l(n)\subset \matn$ and $\mathcal{U}(n)$  denote the groups of invertible and unitary matrices respectively, 
and $\G l (n)^+ =\G l(n)\cap \matn^+$.
A norm $N$ in $\matn$ is unitarily invariant (briefly u.i.n.) if 
$\nui{UAV}=\nui{A}$, for every $A\in\matn$ and $U,\, V\in\mathcal{U}(n)$.

\pausa
For $n\in\N$, let $\I_n=\{1,\ldots,n\}$. 
Given a vector $x\in\C^n$ we denote by $D_x$ the diagonal matrix in $\matn$ whose main diagonal is $x$.
Given $x=(x_i)_{i\in\I_n}\in\R^n$ we denote by $x\da=(x_i\da)_{i\in\I_n}$ the vector obtained by 
rearranging the entries of $x$ in non-increasing order. We also use the notation
$(\R^n)\da=\{x\in\R^n\ :\ x=x\da \}$ and $(\R_{\geq 0}^n)\da=
\{x\in\R_{\geq 0}^n\ :\ x=x\da \}$. Similarly we define $x\ua$ and $(\R^n)\ua$. 
For $r\in\N$, we let $\uno_r=(1,\ldots,1)\in\R^r$.

\pausa
 Given a matrix $A\in\cH(n)$ we denote by $\la(A)=(\la_i(A))_{i\in\I_n}\in (\R^n)\da$ 
the eigenvalues of $A$ counting multiplicities and arranged in 
non-increasing order. Similarly, we denote by 
 $\la\ua(A)=(\la\ua_i(A))_{i\in\I_n}\in (\R^n)\ua$.   
For $B\in\cM_{k\coma r}(\C)$ we let $s(B)=\la(|B|)\in (\R_{\geq 0}^r)\da$ denote the singular values of $B$, i.e. the eigenvalues of $|B|=(B^*B)^{1/2}\in\cM_r(\C)^+$. 

\pausa Arithmetic operations with vectors are performed entry-wise in the following sense:
 in case $x=(x_i)_{i\in\I_k}\in \C^k,\,y=(y_i)_{i\in\I_r}\in \C^r $ 
then $x+y=(x_i+y_i)_{i\in\I_m}$, $x\, y=(x_i\,y_i)_{i\in\I_m}$ and (assuming that $y_i\neq 0$, for $i\in\I_r$) $x/y=(x_i/y_i)_{i\in\I_m}$,  
where $m=\min\{k,\,r\}$. Moreover, if we assume further that $x,\,y\in\R^k$ then we write $x\leqp y$  ($\leqp$, different from the notation $\le$) whenever 
$x_i\leq y_i$, for $i\in\I_k$.

\pausa Given $f: I \rightarrow \R$, where $I\subseteq \R$ is an interval, and $z=(z_i)_{i\in\I_k}\in I^k$ we denote $f(z)=(f(z_i))_{i\in\I_k}\in\R^k$. For example, $|z|=(|z_i|)_{i\in\I_k}$, $\sin(z)=(\sin(z_i))_{i\in\I_k}$.
\EOE

\pausa Next we recall the notion of majorization between vectors, that will play a central role throughout our work.
\begin{fed}\rm 
Let $x,\, y\in\R^k$. We say that $x$ is
{\it submajorized} by $y$, and write $x\prec_w y$,  if
$$
\suml_{i=1}^j x^\downarrow _i\leq \suml_{i=1}^j y^\downarrow _i \peso{for} j\in\I_k\,. 
 $$ If $x\prec_w y$ and $\tr x \igdef \suml_{i=1}^kx_i=
 \tr y$,  then we say that $x$ is
{\it majorized} by $y$, and write $x\prec y$. %\EOE
\end{fed}

\pausa We point out that (sub)majorization is a preorder relation in $\R^k$ that plays a central role in matrix analysis (see Section \ref{sec appendixity}).

\begin{rem}\label{rem acuerdos}
Let $x\in\R_{\geq 0}^k$ and $y\in \R_{\geq 0}^h$ be two vector with {\it non-negative} entries (of different sizes). We extend the notion of submajorization between $x$ and $y$ in the following sense: 
\beq\label{size}
x\prec_w y \peso{if} \begin{cases} (x\coma 0_{h-k}) \prec_w \quad \ y  & \peso{for} k<h \\ 
\quad \quad x  \quad \  \prec_w (y\coma 0_{k-h})   & \peso{for} h<k \end{cases} 
\quad , 
\eeq 
where $0_n$ denotes the zero vector of $\R^n$. \EOE
\end{rem}

\subsection{Basic properties of the spectral spread}
In this section we present several basic properties of the spectral spread, and describe the relationship between this notion and singular values (and other usual notions of matrix analysis). 

\begin{fed}\label{defi spr}\rm 
Let $A\in \cH(n)$. Consider the {\it full spectral spread} of $A$, given by 
\beq\label{spr def}
\spr (A) \igdef \la(A)+\la(-A) = \big(\la\da_i(A)-\la\ua_i(A)\, \big)_{i\in \I_n} \in (\R^n)\da \ .
\eeq
Denote by $k = [\frac n2]$ (integer part). 
We also consider the {\it spectral spread of} $A$, that is the non-negative part of $\spr(A)$:
\beq\label{spr+ def}
\sprm (A) =  \big(\spr_i(A)\, \big)_{i\in \I_k}
=\big(\la_i(A)-\la\ua_i(A)\, \big)_{i\in \I_k} \in (\R_{\ge 0}^k)\da \ .
\eeq
%\EOE
\end{fed}

\begin {rem}\label{rem cosas elemen sobre spread} \rm
Let $n = 2k $ or $2k+1$, and $A 
\in \cH(n)$. Let $\la(A)=(\la_i)_{i\in\I_n}$ and $\la\ua(A)=(\la\ua_i)_{i\in \I_n}$.
\ben
\item For every $t\in \R$ we have that
\beq\label{con -t}
\spr(A-tI)=\spr(A) \py \sprm(A-tI)=\sprm(A) \ . 
\eeq
\item 
For $r\in \I_k$, it is well known (see \cite{bhatia}) that 
$$
\barr{rl}
\suml _{i \in \I_r} \la_i(A) & = 
\max \, \left\{\,\suml_{i \in \I_r}  \api A\,x_i\coma x_i\cpi : 
\{x_i\}_{i\in\I_r} \text{ is an ONS }\, \right\} \py \\&\\
-\suml _{i \in \I_r} \la_i\ua(A) & = \suml _{i \in \I_r} \la_i(-A)
=\max \, \left\{\,-\suml_{i \in \I_r}  \api A\,y_i\coma y_i\cpi : 
\{y_i\}_{i\in\I_r} \text{ is an ONS }\, \right\}
\earr
$$
Then, for each $r\in \I_k$ we have that 
\beq\label{spr max}
\sum_{i \in \I_r} \spr_i(A) =
\max \, \left\{\,\suml_{i \in \I_{r}}  \api A\,x_i\coma x_i\cpi 
- \api A\,y_i\coma y_i\cpi  : 
\{x_i\} \text{ and $\{y_i\}$  are ONS's }\, \right\} 
\eeq
\item
It is straightforward to check that $\max\{|\la_i(A)|,|\la\ua _{i}(A)|\}\leq s_i(A)$, for $i\in \I_k$; hence, we conclude that 
\beq\label{spr vs s}
\sprm_i(A)\leq   |\la_i|+|\la\ua _{i}| \le 2s_i(A) \peso{for every} i \in \I_k\ .
\eeq
In the positive case, we have that:
\beq\label{spr vs s pos}
A\in \matn^+\implies 
 \spr_i(A) \le \la_i= s_i(A)\peso{for every} i \in \I_k\ .
\eeq
\item 
Conversely, notice that
$$
\sprm_i(A)=\la_i-\la_{n-i+1}=\underbrace{\la_i-\la_{k+1}}_{\geq 0} 
+\underbrace{\la_{k+1}-\la_{n-i+1}}_{\geq 0}\peso{for}i\in\I_k\,.
$$Hence, it follows that 
\beq\label{s vs spr}
s(A-\la_{k+1}\,I) =
\big(\, (\la_i-\la_{k+1}\,)_{i\in \I_{k}}\coma 
(\la_{k+1}-\la_{k+i}\,)_{i\in \I_{n-k}} \,\big)\da 
 \prec \sprm(A) \in \R_{\ge 0}^k 
\ .
\eeq
\item 
On the other hand,  if $B\in \cH(n)$ then 
\beq\label{mayo spr}
\la(A)\prec\la(B) \in\R^{n} \implies \quad \spr(A) \prec \spr(B)
\implies \sprm(A) \prec_w \sprm(B) \ ,
\eeq
which are direct consequences of 
Definition \ref{defi spr} together with Lemma \ref {lem submaj props1}.
\item Consider an isometry $Z\in\cM_{n,r}(\C)$ i.e., such that $Z^*Z=I_r\,$, 
for some $r\in\I_n\,$. 
Then, it is easy to see that 
$Z^*AZ\in\cM_r(\C)$ is a principal submatrix of a unitary conjugate of the matrix $A$.
Therefore, we can apply the interlacing inequalities (see \cite{bhatia}) and get that 
$\la\ua_i(A)\leq \la\ua _i(Z^*AZ)$ and $\la_i(Z^*AZ)\leq  \la_i(A)$ for $i\in\I_r\,$. 
As a consequence,  
\beq \label {comprimido}
\sprm_i(Z^*AZ)\leq \sprm_i(A)\peso{for} i\in \I_{[r/2]} 
\stackrel{\eqref{size}}{\implies}
 \sprm(Z^*AZ)\prec_w\sprm(A)\ .
\eeq

\item If $U\in\cU(n)$ is a unitary matrix then, using Weyl's inequality 
(item 1 in Theorem \ref{teo ah}),  
$$\la(A-U^*AU)\prec\la(A)+\la(-U^*AU)=\la(A)-\la\ua(A)= \spr(A)\,.$$ Moreover, 
if $N$ is a u.i.n. then
$$
\max\,\big\{\,N(A-U^*AU)\ : \ U\in \cU(n)\,\big\}=N(D_{\spr(A)})\,.
$$
That is, $|\spr(A)|\da=\sprm(A\oplus A)=(\sprm A \coma \sprm A)\da$ 
(with an extra $0$ if $n$ is odd) allows us 
to compute the diameter of the unitary orbit of $A$ (with respect to any unitarily 
invariant norm). In this sense, $\spr(A)$ can be considered 
as a vector valued measure of the diameter of the unitary orbit of $A$.
\EOE
\een
\end{rem}

\begin{pro}\label{pro com spr y s} 
Let $A \in \cH(n)$. Then \rm 
\beq\label{vale esto} 
\frac 1 2 \,|\spr(A)|\da=\frac 1 2 \,\sprm(A\oplus A)\prec_w  s(A)\ .
\eeq
\end{pro}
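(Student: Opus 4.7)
The identity $|\spr(A)|^\downarrow=\sprm(A\oplus A)$ is essentially bookkeeping: since $\spr_i(A)=\la_i(A)-\la\ua_i(A)$ satisfies $\spr_{n-i+1}(A)=-\spr_i(A)$, the vector $|\spr(A)|$ consists of each value $\sprm_i(A)$, $i\in\I_{[n/2]}$, appearing twice (with an extra $0$ in the middle when $n$ is odd); this is precisely the list of entries of $\sprm(A\oplus A)$, whose length is $[2n/2]=n$. So the content of the proposition lies in establishing the submajorization $\tfrac12\,\sprm(A\oplus A)\prec_w s(A)$.

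Write $\mu_i=\la_i(A)$ and $\sigma_i=s_i(A)=|\mu|\da_i$. I would fix $j\in\I_n$ and split on parity. Using that $\sprm(A\oplus A)=(\sprm_1 A,\sprm_1 A,\sprm_2 A,\sprm_2 A,\dots)\da$ (with a trailing $0$ if $n$ is odd, and noting $\sprm_i A = 0$ for $i > [n/2]$), a direct computation gives, for $j=2q$,
\[
\tfrac12\sum_{i=1}^{j}\sprm_i(A\oplus A)=\sum_{i=1}^{q}(\mu_i-\mu_{n-i+1}),
\]
and for $j=2q+1$, an extra term $\tfrac12(\mu_{q+1}-\mu_{n-q})$. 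Applying $\mu_i\le|\mu_i|$ and $-\mu_{n-i+1}\le|\mu_{n-i+1}|$ termwise, the partial sum is bounded above by $\sum_{i=1}^n w_i|\mu_i|$ where $w\in[0,1]^n$ is supported on the indices $\{1,\dots,q\}\cup\{n-q+1,\dots,n\}$ with weight $1$ and (only for $j$ odd) on $q+1$ and $n-q$ with weight $\tfrac12$ (which add to $1$ in the degenerate case $n=2q+1$). In either case $\sum_i w_i=j$.

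The final ingredient is the standard weighted rearrangement bound: for nonnegative $a_i$ and weights $w_i\in[0,1]$ with $\sum w_i$ equal to an integer $j$,
\[
\sum_{i=1}^n w_i\,a_i\le \sum_{i=1}^{j} a\da_i,
\]
which I would justify in one line (the linear functional $w\mapsto\sum w_i a_i$ on the polytope $\{w:0\le w_i\le 1,\ \sum w_i=j\}$ is maximized at the vertex $w=\uno_{\{a\da_1,\dots,a\da_j\}}$). Applying this with $a_i=|\mu_i|$ yields $\sum w_i|\mu_i|\le\sum_{i=1}^{j}\sigma_i$, which is exactly the required partial-sum inequality.

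\textbf{Main obstacle.} The even case $j=2q$ reduces to the one-line observation that $\sum_{i=1}^q(\mu_i-\mu_{n-i+1})$ is a sum of $2q$ absolute values at distinct indices, hence bounded by $\sum_{i=1}^{2q}\sigma_i$. The only slightly subtle point is the odd case: the naive triangle bound $\sprm_i(A)\le 2\sigma_i$ of Remark \ref{rem cosas elemen sobre spread}(3) is strictly too weak (it would give $\sum_{i=1}^{2q}\sigma_i$ in place of the target $\sum_{i=1}^{2q+1}\sigma_i$ only at the cost of a factor $2$), so the fractional weights $\tfrac12$ on the middle pair $(q+1,n-q)$ are essential and have to be handled by the weighted rearrangement inequality rather than by absorbing them into a single $\sigma_{2q+1}$.
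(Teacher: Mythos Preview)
Your argument is correct. Both you and the paper first record the identity $|\spr(A)|\da=\sprm(A\oplus A)$ and then bound the partial sums via $\mu_i-\mu_{n-i+1}\le|\mu_i|+|\mu_{n-i+1}|$ together with the Ky~Fan characterization $\sum_{i\in\I_m}s_i(A)=\max_{|F|=m}\sum_{i\in F}|\mu_i|$. The even case $j=2q$ is handled identically in both proofs.

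The only genuine difference is in the odd case $j=2q+1$. You treat it head-on with a weighted rearrangement (hypersimplex) inequality, which is correct and self-contained. The paper sidesteps the odd case entirely: because $\sprm(A\oplus A)$ has each entry repeated twice, the odd partial sums of $\sprm(A\oplus A)$ are averages of consecutive even ones, namely
\[
\sum_{i=1}^{2q+1}\sprm_i(A\oplus A)=\tfrac12\Big(\sum_{i=1}^{2q}\sprm_i(A\oplus A)+\sum_{i=1}^{2q+2}\sprm_i(A\oplus A)\Big),
\]
while for the decreasing vector $2\,s(A)$ the reverse averaging inequality holds. Hence checking only the even indices $2r$ already forces the full submajorization. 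This makes your ``main obstacle'' disappear without any hypersimplex machinery, though your route works just as well and is arguably more transparent about where each $\sigma_i$ is spent.
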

\proof Denote by  $\la (A) = (\la_i)_{i\in \I_{n}} \in (\R^{n})\da\,$. Then 
for $1\le i\le k =[\frac n2]$, 
$$
\sprm_{2i-1}(A\oplus A) = \sprm_{2i}(A\oplus A) = \sprm_{i}(A) = 
\la_i - \la_{n-i+1} 
\le |\la_i |+| \la_{n-i+1}|\,.
$$
On the one hand we have that $\sprm(A\oplus A)$ equals
$$
\sprm(A\oplus A)=(\sprm A \coma \sprm A)\da\in \R^n\peso{if}  n=2k $$
 $$\peso{or}  \sprm(A\oplus A)=(\sprm A\coma \sprm A \coma 0)\da \in \R^n \peso{if}  n=2k+1\,. 
$$
Therefore, in order to check that $\sprm(A\oplus A)\prec_w 2\,s(A)$ it suffices to check that
$$
\sum_{i\in\I_{2r}}\sprm(A\oplus A)\leq 2\ \sum_{i\in\I_{2r}} s(A)\peso{for} r\in\I_{k}\,,
$$ for the even cases $2r$. Hence, if $r\in\I_k$, we have that 
$$
\sum_{i\in\I_{2r}} \sprm(A\oplus A)\leq 2 \ \sum_{i\in\I_{r}} |\la_i|+  |\la_{n-i+1}|\leq 
2\ \sum_{i\in\I_{2r}} s_i(A)$$
since $s(A)= (|\la_i|)_{i\in \I_{n}} \da$, 
and $\suml_{i \in \I_{2r}} s_i(A) = \max\{\suml _{j\in \mathbb{F}} 
|\la_j| : |\mathbb{F} | =2r\}$. Notice that Eq. \eqref{vale esto} follows from this fact. 
\QED

\pausa
Notice that if $A\in\cH(n)$ then, by items 1. and 4. in Remark \ref{rem cosas elemen sobre spread} and Proposition \ref{pro com spr y s}, 
\beq\label{vale esto 2} 
\frac{1}{2}\, \sprm(A\oplus A)\prec s\big(\,A - \la_{k}(A)\,I\,\big)
\stackrel{\eqref{s vs spr}}{\prec} \sprm(A) \ .
\eeq
Our next result is a spectral spread version of Lidskii's inequality.

\begin{pro}\label{spreadskii aditivo}
Let $A,B\in \H(n)$. Then 
\rm
\beq
\spr(A)-\spr(B) \prec \spr(A-B)\prec \spr(A)-\spr\ua(B) =\spr(A)+\spr(B) \ .
\eeq 
\end{pro}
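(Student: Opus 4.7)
The plan is to derive both majorizations from the classical Lidskii--Wielandt inequality $\la(X)-\la(Y)\prec \la(X-Y)$ and its symmetric companion $\la(X-Y)\prec \la(X)-\la\ua(Y)$, combined with the antisymmetric structure of $\spr$. I begin with two preliminaries: (i) since $\la\ua(X)$ is a permutation of $\la(X)$, one has $\sum_{i=1}^n\spr_i(X)=0$ for every $X\in\H(n)$, so both vectors in each asserted majorization sum to zero and it suffices to establish submajorization; (ii) the antisymmetry $\spr(B)_{n-i+1}=-\spr(B)_i$ yields $\spr\ua(B)=-\spr(B)$, which gives the rightmost identity $\spr(A)-\spr\ua(B)=\spr(A)+\spr(B)$.

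For the upper bound $\spr(A-B)\prec\spr(A)+\spr(B)$, I apply Lidskii--Wielandt in the form $\la(X-Y)\prec\la(X)-\la\ua(Y)$ to the pairs $(A,B)$ and $(B,A)$, obtaining
$$\la(A-B)\prec\la(A)-\la\ua(B) \py \la(B-A)\prec\la(B)-\la\ua(A).$$
Using $\la_i(B-A)=-\la_{n-i+1}(A-B)=-\la\ua_i(A-B)$, the componentwise sum of the two left-hand sides is exactly $\spr(A-B)$, and the componentwise sum of the two right-hand sides is exactly $\spr(A)+\spr(B)$. All four vectors lie in $(\R^n)\da$ (each is a sum of two non-increasing vectors), and it is elementary to check that if $x\prec y$ and $u\prec v$ with $x,y,u,v$ all already in non-increasing order, then $x+u\prec y+v$: the partial sums simply add, and the vectors $x+u$ and $y+v$ are themselves non-increasing, so their partial sums agree with those of their decreasing rearrangements. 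This yields $\spr(A-B)\prec\spr(A)+\spr(B)$.

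For the lower bound $\spr(A)-\spr(B)\prec\spr(A-B)$, the termwise addition trick above is no longer available, because $\spr(A)-\spr(B)$ need not be in non-increasing order. I work instead with arbitrary $k$-subsets: given $F\subset\I_n$ with $|F|=k$ and its reflection $F'=\{n-i+1:i\in F\}$ (also of size $k$), decompose
$$\sum_{i\in F}\bigl(\spr(A)-\spr(B)\bigr)_i=\sum_{i\in F}\bigl(\la_i(A)-\la_i(B)\bigr)+\sum_{j\in F'}\bigl(\la_j(B)-\la_j(A)\bigr).$$
The Ky Fan form of Lidskii's inequality (a direct consequence of $\la(X)-\la(Y)\prec\la(X-Y)$) states that $\sum_{i\in G}(\la_i(X)-\la_i(Y))\leq \sum_{i=1}^{|G|}\la_i(X-Y)$ for every index set $G$. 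Applying this to $F$ (with $(A,B)$) and to $F'$ (with $(B,A)$) and summing yields
$$\sum_{i\in F}\bigl(\spr(A)-\spr(B)\bigr)_i\leq \sum_{i=1}^k\la_i(A-B)+\sum_{i=1}^k\la_i(B-A)=\sum_{i=1}^k\spr_i(A-B).$$
Taking the supremum over $F$ with $|F|=k$ establishes the required partial sum estimate, and equality of total sums upgrades submajorization to majorization.

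The main technical subtlety lies in the lower bound: the two pieces $\la(A)-\la(B)$ and $\la\ua(B)-\la\ua(A)$ making up $\spr(A)-\spr(B)$ are not aligned under a common monotone ordering, so a naive termwise application of Lidskii fails. The reflection $F\mapsto F'$ is precisely the device that couples the two halves of $\spr$ into a single Lidskii-type bound, and is therefore the key step in the argument.
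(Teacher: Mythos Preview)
Your proof is correct. The upper bound argument matches the paper's exactly. For the lower bound, however, you take an unnecessarily circuitous route.

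Your stated reason for avoiding the ``termwise addition trick'' in the lower bound is that $\spr(A)-\spr(B)$ need not be non-increasing. But the addition principle you need does \emph{not} require all four vectors to be non-increasing: it suffices that the two \emph{majorants} lie in $(\R^n)\da$. This is precisely item~3 of Lemma~\ref{lem submaj props1} in the paper: if $z,w\in(\R^k)\da$, $x\prec z$ and $y\prec w$, then $x+y\prec z+w$. (Proof: for any $|F|=k$, $\sum_{i\in F}(x_i+y_i)\le \sum_{i=1}^k x_i\da+\sum_{i=1}^k y_i\da\le \sum_{i=1}^k z_i+\sum_{i=1}^k w_i=\sum_{i=1}^k(z+w)_i$, the last equality because $z+w$ is already non-increasing.) The paper applies this directly: write $\spr(A)-\spr(B)=\bigl(\la(A)-\la(B)\bigr)+\bigl(\la(-A)-\la(-B)\bigr)$, use Lidskii on each summand to get majorants $\la(A-B)$ and $\la(-(A-B))$, both in $(\R^n)\da$, and add. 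Your reflection argument $F\mapsto F'$ is correct, but it is effectively re-proving this addition lemma in the specific case at hand rather than invoking it.
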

\proof
By Lidskii's additive inequality and item 3 of Lemma \ref {lem submaj props1} (see the Appendix),  
$$ 
\barr{rl} \spr(A)-\spr(B) & \ \ = \ 
\lambda(A)-\lambda(B)+\lambda(-A)-\lambda(-B) \\&\\
 & \precski \lambda(A-B)+\lambda(-(A-B))
=\spr(A-B) \ .
\earr
$$
For the other inequality,  note that $\spr\ua(B)=\lambdup(B)-\lambda(B)=-\spr(B)$. Therefore 
$$
\barr{rl}\spr(A-B) 
& \ \ = \ \lambda(A-B)+\lambda(B-A) \\&\\
& \precski \lambda(A)-\la\ua(B)+\lambda(B)-\lambdup(A)
 =\spr(A)-\spr\ua(B)\ , 
\earr  
$$
where we have used again item 3 of Lemma \ref {lem submaj props1}. 
\QED

\subsection{A key inequality for the spectral spread}\label{sec 31}

The following inequality plays a central role in our present work.

\begin{teo}\label{vale con 2 gral}
Let $A =\bm{cc}A_1&B \\B^*&A_2 \em \barr {c}\C^k\\ \C^r\earr \in \cH(k+r)$. Then 
\rm
\beq\label{con 2 gral}
2\,s(B)\prec_w \sprm(A) \ . 
\eeq
\end{teo}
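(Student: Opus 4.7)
The cornerstone of the argument is the variational identity \eqref{spr max}: for each $j\in\I_{[n/2]}$ with $n=k+r$,
\begin{equation*}
\sum_{i=1}^j \sprm_i(A)\;=\;\max\Bigl\{\sum_{i=1}^j\bigl(\api Ax_i\coma x_i\cpi -\api Ay_i\coma y_i\cpi\bigr):\{x_i\}_{i=1}^j\py\{y_i\}_{i=1}^j\text{ ONS's in }\C^{n}\Bigr\}.
\end{equation*}
The plan is to construct, from a singular value decomposition of $B$, two explicit orthonormal systems in $\C^{k+r}$ for which this test functional evaluates to precisely $2\sum_{i=1}^j s_i(B)$.

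By the symmetry $B\mapsto B^*,\,k\leftrightarrow r$ we may assume $k\le r$. Since $\sprm(A)$ depends only on the eigenvalues of $A$, conjugating $A$ by the block-diagonal unitary $U\oplus V\in\mathcal{U}(k+r)$ arising from a singular value decomposition $B=UDV^*$ leaves $\sprm(A)$ unchanged and replaces the off-diagonal block by the rectangular ``diagonal'' matrix $D$ with entries $s_1(B)\ge\cdots\ge s_k(B)\ge 0$; the diagonal blocks are replaced by $U^*A_1U$ and $V^*A_2V$, but their precise form is immaterial for what follows. Letting $\{e_i\}_{i=1}^k$ and $\{e'_i\}_{i=1}^r$ denote the canonical orthonormal bases of $\C^k$ and $\C^r$, we may therefore assume
\begin{equation*}
Be'_i\;=\;s_i(B)\,e_i\coma \quad B^*e_i\;=\;s_i(B)\,e'_i \peso{for every} i\in\I_k\,.
\end{equation*}
For each $i\in\I_k$ set
\begin{equation*}
x_i\;=\;\frac{1}{\sqrt{2}}\bm{c}e_i\\e'_i\em\coma\quad y_i\;=\;\frac{1}{\sqrt{2}}\bm{c}e_i\\-e'_i\em\;\in\;\C^{k+r}\,.
\end{equation*}
Orthonormality of $\{x_i\}_{i=1}^k$ and of $\{y_i\}_{i=1}^k$ is immediate from the orthonormality of $\{e_i\}$ and $\{e'_i\}$, and expanding the Hermitian block structure of $A$ yields, for each $i\in\I_k$,
\begin{equation*}
\api Ax_i\coma x_i\cpi\,-\,\api Ay_i\coma y_i\cpi\;=\;2\,\Re\,\api Be'_i\coma e_i\cpi\;=\;2\,s_i(B)\,,
\end{equation*}
since the $A_1$- and $A_2$-contributions cancel in the difference.

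Inserting these two ONS's into \eqref{spr max} yields $2\sum_{i=1}^j s_i(B)\le \sum_{i=1}^j \sprm_i(A)$ for every $j\in\I_k$. To promote this to the asserted submajorization $2\,s(B)\prec_w \sprm(A)$ in the sense of Remark~\ref{rem acuerdos}, note that the hypothesis $k\le r$ gives $k\le [n/2]\le r$, that $s_i(B)=0$ for $i>k$ (because $B$ has rank at most $k$), and that $\sprm(A)$ has non-negative, non-increasing entries. Hence the partial sums of $2\,s(B)$ are constant past $j=k$ while those of $\sprm(A)$ keep growing, so the partial-sum inequalities extend automatically to all indices $j\le r$ after zero-padding $\sprm(A)$ as prescribed in Eq.~\eqref{size}.

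The only genuine difficulty lies in choosing the ``dual'' orthonormal systems $\{x_i\},\{y_i\}$: the construction through the SVD of $B$ is tailored so that the diagonal-block contributions cancel in the difference $\api Ax_i\coma x_i\cpi-\api Ay_i\coma y_i\cpi$, while the two copies of the off-diagonal term $\api Be'_i\coma e_i\cpi=s_i(B)$ add constructively. Once this pair is in hand, the variational formula \eqref{spr max} converts it into the partial-sum inequality, and matching lengths as in Remark~\ref{rem acuerdos} is purely cosmetic.
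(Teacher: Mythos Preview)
Your proof is correct and takes a genuinely different route from the paper's. The paper computes the commutator $UA-AU$ with the reflection $U=I_k\oplus(-I_r)$, observes that this has singular values $2(s(B),s(B),0_{|k-r|})\da$, and then invokes Lidskii's inequality $\la(A-U^*AU)\prec\la(A)-\la\ua(A)=\spr(A)$ together with the convexity of $|t|$ to obtain the doubled relation $2(s(B),s(B))\da\prec_w(\sprm(A),\sprm(A))\da$, from which the claim follows. By contrast, you work directly with the Ky Fan variational identity \eqref{spr max} and exhibit explicit orthonormal test systems $\{x_i\},\{y_i\}$ built from the SVD of $B$; the diagonal-block contributions cancel in the difference and the off-diagonal ones add, yielding $2\sum_{i\le j}s_i(B)\le\sum_{i\le j}\sprm_i(A)$ immediately. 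Your argument is somewhat more elementary, since it avoids Lidskii's inequality and the ``halving'' step of undoing a doubled submajorization; conceptually the two proofs are close cousins, since the reflection $U$ of the paper is precisely the unitary sending each of your $x_i$ to $y_i$, but the machinery invoked is different and your path to the partial-sum inequalities is more direct.
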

\proof
Consider 
$U =\bm{cc}I&0 \\0&-I \em \barr {c}\C^k\\ \C^r\earr \in \cU(k+r)$. Then
$$
s(UA-AU)=s(A-U^*AU)=|\la(A-U^*AU)|\da\,.
$$ 
By Lidskii's inequality (Theorem \ref{teo ah}) we have that 
$$
\la(A-U^*A\,U)\prec \la(A)-\la\ua(U^*AU)=\la(A)-\la\ua(A) \,. 
$$ 
Using Remark \ref{convfunction} we get that 
$$
s(UA-AU)=|\la(A-U^*A\,U)|\da\prec_w|\la(A)-\la\ua(A)|\da
=\Par \sprm(A)\coma \sprm(A)\,\Pal\da 
$$ 
(or $\Par \sprm(A)\coma \sprm(A)\coma 0\Pal\da$ if $k+r$ is odd).  Using Proposition \ref{hat trick como en el futbol} and noticing that 
$$
UA-AU=\bm{cc}0&2\,B \\-2\,B^*&0 \em \barr {c}\C^k\\ \C^r\earr 
\implies s(UA-AU)=2\,\Par s(B)\coma s(B) \coma 0_{|k-r|}\,\Pal\da\ ,
$$ 
where $s(B)$ has size $\min\,\{r\coma k\} \le [\frac{k+r}{2}]\,$ (we use that 
$\rk \, B \le \min\,\{r\coma k\}$). 
We conclude that 
\beq 
2\,\Par s(B)\coma s(B) \coma 0_{|k-r|}\,\Pal\da
\prec_w \Par\sprm(A)\coma \sprm(A)\,\Pal\da\implies 2\,s(B)\prec_w\sprm(A)\ .
\QEDP \eeq

\medskip

\begin{rem}\label{es charp}
We point out that the inequality in Eq. \eqref{con 2 gral} is sharp. Indeed, consider ($k=r$)
$$
\peso{if} A =\bm{cc} 0&B \\B^*&0 \em \barr {c}\C^k\\ \C^k\earr 
\implies \la(A)=(s(B)\coma -s(B))\da \peso{(see Proposition \ref{hat trick como en el futbol})}
\,.$$
Thus, in this case we have the equality $2\,s(B) = \sprm(A)$.
\EOE
\end{rem}

\begin{rem}\label{con Tao}
Let $A 
 \in \cH(k+r)$ 
be as in Theorem \ref{vale con 2 gral}.  In \cite{Tao}, 
Y. Tao proved that 
if $A \in \matrec{k+r}^+$, then 
\beq\label{Tao}
2s_j(B)\leq \la_j(A) = s_j(A) \peso{for every} j \in \I_{k+r} \  .
\eeq
In the positive case we have that $\sprm (A) \leqp\la(A)$. 
Nevertheless, 
the inequality 
\beq\label{no vale}
2s_j(B)\leq \sprm_j(A) \peso{for every} j \in \I_{[\frac{k+r}2]} 
\eeq
is not true, even for positive semidefinite matrices $A$. 
For example take 
$$A=\begin{pmatrix}
2&1&0&1\\
1&2&1&0\\
0&1&3&1\\
1&0&1&3
\end{pmatrix} \in \matrec{2+2}^+$$
Then $s(B)=(1,1)$, and $\lambda(A)=(4.61\coma 2.61 \coma 2.38 \coma 0.39)$. Hence, Eq. \eqref{no vale} fails for $j=2$. In particular, Theorem \ref{vale con 2 gral} does not imply Tao's inequality. Neither Tao's inequality implies Theorem \ref{vale con 2 gral}, 
as the Eq. \eqref{vale esto 2} could suggests, because
$A-\la_{[\frac{k+r}2]}(A)\,I \notin \matrec{k+r}^+$.

\pausa On the other hand, when $A\in \cM_{k+r}(\C)^+$ then both Tao's and our result are applicable. In this case, if $N$ is a unitarily invariant norm 
and $\mu = (\sprm(A),\,0)\in \R^{k+r}$,  then 
\beq\label{eq consec de teo y tao}
2\, N\begin{pmatrix} 0& B\\ 0&0\end{pmatrix}
\leq N(D_\mu) 
\leq N(A)\ ,
\eeq 
where 
$D_\mu $ is the diagonal matrix with main diagonal $\mu$. 
Indeed, since $\mu 
\leqp \la(A)=s(A)$, then 
$$
2\,(s(B),\,0)\stackrel{\eqref{vale con 2 gral}}{\prec_w}(\sprm (A),\,0) \prec_w s(A)
$$ 
and Eq. \eqref{eq consec de teo y tao} follows from these relations. We also point out that 
in (the generic) case 
$A\in \G l (k+r)^+$
we get a strict inequality 
$N(D_\mu) < N(A)$, for an arbitrary strictly convex u.i.n. $N$. On the other hand,  
notice that Theorem \ref{vale con 2 gral} also applies in case $A$ is an arbitrary (not necessarily positive semidefinite) Hermitian matrix.
\EOE
\end{rem}

\medskip

\begin {cor}\label{antidiagonal}
Let 
$A= \bm{cc}A_1&B \\B^*&A_2 \em \barr {c}\C^k\\ \C^r\earr \in \cH(k+r)$. Then 
\beq\label{sin diag}
\sprm \bm{cc} 0&B \\B^*&0 \em \prec_w \sprm (A)
\py 
\sprm \bm{cc} A_1 &0 \\0&A_2 \em \prec_w \sprm (A)
\ . 
\eeq
\end{cor}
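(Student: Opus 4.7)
\textbf{Proof plan for Corollary \ref{antidiagonal}.}

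The plan is to deduce both inequalities from the results already established in this subsection, with the anti-diagonal statement following immediately from Theorem \ref{vale con 2 gral} and the block-diagonal one reduced to the additive Lidskii inequality of Proposition \ref{spreadskii aditivo}.

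For the first inequality in \eqref{sin diag}, the idea is just to identify the spectral spread of an anti-diagonal Hermitian block matrix. By (the referenced) Proposition \ref{hat trick como en el futbol}, the matrix $\begin{pmatrix} 0 & B \\ B^* & 0 \end{pmatrix}$ has eigenvalues $(s(B), -s(B))\da$, so its spectral spread is $2\,s(B)$ (with zeros padded if $k\neq r$, per the size convention in Remark \ref{rem acuerdos}). Hence
$$
\sprm \bm{cc} 0 & B \\ B^* & 0 \em \ =\ 2\,s(B) \ \prec_w\ \sprm(A),
$$
where the submajorization is Theorem \ref{vale con 2 gral}.

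For the second inequality the plan is to mimic the proof of Theorem \ref{vale con 2 gral} by using the block sign-change unitary $U =\bm{cc} I_k & 0 \\ 0 & -I_r \em \in \cU(k+r)$. A direct computation gives
$$
\tfrac{1}{2}\,(A + U^*A\,U) \ =\ \bm{cc} A_1 & 0 \\ 0 & A_2 \em .
$$
Since $U$ is unitary, $\lambda(U^*AU) = \lambda(A)$, hence $\spr(U^*AU) = \spr(A)$. Now I apply Proposition \ref{spreadskii aditivo} to the pair $(A,\, -U^*AU)$, using that $\spr(-C) = \spr(C)$ for any $C\in \cH(n)$ (since $\spr$ is symmetric in $\la$ and $\la\ua$):
$$
\spr(A + U^*AU) \ \prec \ \spr(A) + \spr(-U^*AU) \ =\ 2\,\spr(A).
$$
Dividing by $2$ and using the positive homogeneity of $\spr$, we get
$\spr \bigl(\text{diag}(A_1,A_2)\bigr) \prec \spr(A)$, and then the implication in Eq. \eqref{mayo spr} yields the desired submajorization of $\sprm$.

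I do not anticipate a serious obstacle: the first inequality is essentially just a restatement of Theorem \ref{vale con 2 gral} once one recognizes the spectrum of an off-diagonal block matrix, while the second is a one-line application of the additive spectral-spread Lidskii inequality (Proposition \ref{spreadskii aditivo}) combined with the averaging trick $A + U^*AU = 2\,\mathrm{diag}(A_1,A_2)$. The only point requiring a moment of care is the padding convention in Remark \ref{rem acuerdos} when $k \neq r$, so that the vectors being compared have compatible sizes.
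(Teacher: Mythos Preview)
Your argument is correct. For the first inequality it is identical to the paper's: identify the eigenvalues of the anti-diagonal block via Proposition~\ref{hat trick como en el futbol}, so that its spectral spread equals $2\,s(B)$ (padded with zeros), and invoke Theorem~\ref{vale con 2 gral}.

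For the second inequality your route is correct but differs from the paper's. The paper simply cites the pinching inequality \eqref{pinch}: with the two block projections one has $A_1\oplus A_2=\cC_\cP(A)$, hence $\lambda(A_1\oplus A_2)\prec\lambda(A)$, and then \eqref{mayo spr} gives $\sprm(A_1\oplus A_2)\prec_w\sprm(A)$. You instead realize the pinching as the average $\tfrac12(A+U^*AU)$ with $U=I_k\oplus(-I_r)$ and bound $\spr$ of the sum via the spread-Lidskii inequality (Proposition~\ref{spreadskii aditivo}), reaching the same intermediate conclusion $\spr(A_1\oplus A_2)\prec\spr(A)$ before applying \eqref{mayo spr}. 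Both arguments are short; the paper's is a one-line citation, while yours trades the pinching reference for an explicit averaging computation plus Proposition~\ref{spreadskii aditivo}. Your approach has the minor virtue of staying entirely within the $\spr$ calculus developed in this section, whereas the paper's route passes through eigenvalue majorization first.
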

\proof
By Proposition \ref{hat trick como en el futbol} we get that 
$\sprm \bm{cc} 0&B \\B^*&0 \em = 2\, s(B)$, 
(modulo some zeros at the end to equate sizes). 
Applying Eq. \eqref{con 2 gral}, we get the first 
inequality in Eq. \eqref{sin diag}. The second follows 
from Eq.'s \eqref{mayo spr} and \eqref{pinch}. 
\QED

\medskip

\pausa
In what follows we develop two different applications of the key inequality 
in Theorem \ref{vale con 2 gral}, that are related to  a companion to Zhan's inequality from \cite{Zhan00} and with results related to  direct rotations between subspaces from \cite{DavKah,QZLi} in the finite dimensional context. We begin with the following

\medskip

\begin{rem} In \cite{Zhan00} Zhan showed that given $A_1,\,A_2\in \cM_n(\C)^+$ then we have that 
\beq\label{eq zhan1}
s_i(A_1-A_2)\leq s_i(A_1\oplus A_2)\peso{for every} i \in \I_n \ .
\eeq These fundamental inequalities are known to be equivalent to other central results in matrix analysis 
(see \cite{AuKitt,Tao,Zhan02,Zhan04}). On the other hand, these entry-wise inequalities between singular values allow one to get operator inequalities the form $A_1-A_2\leq V^*(A_1\oplus A_2)V$, for suitable contractions $V\in \cM_{2n,n}(\C)$. Moreover, Eq. \eqref{eq zhan1} imply the (weaker) submajorization relation $s(A_1-A_2)\prec_w s(A_1\oplus A_2)$, as in Eq. \eqref {size}.

\pausa In case $C,\,D\in\mat$ are arbitrary, then the previous inequality does not hold. Nevertheless, a small modification of arguments in \cite{Zhan00} imply the inequalities
\beq\label{Evita}
s_i(C-D)  \ \le \ 2\ s_i(C\oplus D) \peso{for every} i \in \I_n \ .
\eeq The previous inequalities are sharp, even for Hermitian matrices $C,\,D\in\cH(n)$ (take $C\in\cM_n(\C)^+$ and let $D=-C$). As before, these entry-wise inequalities between singular values allow us to get some related operator inequalities and submajorization relations.

\pausa On the other hand, if we are interested in norm inequalities with respect to unitarily invariant norms then we can improve the upper bounds derived from Eqs. \eqref{eq zhan1} and \eqref{Evita} for arbitrary Hermitian matrices as follows.
\EOE
\end{rem}

\medskip

\begin{teo}\label{lem prelim1}
 Let $A_1,\, A_2\in\cH(n)$ be Hermitian matrices. Then \rm
\beq\label{Peron}
s(A_1-A_2) \prec_w \sprm \,(A_1\oplus A_2)
\ .
\eeq
\end{teo}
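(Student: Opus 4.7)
The plan is to reduce Theorem \ref{lem prelim1} directly to the key inequality of Theorem \ref{vale con 2 gral} via a Hadamard-type unitary conjugation of $A_1\oplus A_2$.

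First, I would introduce the block unitary
\[
U \;=\; \frac{1}{\sqrt{2}} \begin{pmatrix} I_n & I_n \\ I_n & -I_n \end{pmatrix} \in \cU(2n),
\]
and compute the conjugation $U(A_1\oplus A_2)U^*$ directly. A two-line block multiplication gives
\[
U\,(A_1\oplus A_2)\,U^* \;=\; \frac{1}{2}\begin{pmatrix} A_1+A_2 & A_1-A_2 \\ A_1-A_2 & A_1+A_2 \end{pmatrix},
\]
which is still in $\cH(2n)$ since $A_1,A_2\in\cH(n)$. Call this matrix $\widetilde{A}$, so that its off-diagonal block is $B=\frac{1}{2}(A_1-A_2)$, and note that $2\,s(B)=s(A_1-A_2)\in\R^n$.

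Next, I would apply Theorem \ref{vale con 2 gral} to $\widetilde{A}$ (with $k=r=n$), obtaining
\[
s(A_1-A_2) \;=\; 2\,s(B) \;\prec_w\; \sprm(\widetilde{A}).
\]
Since the spectral spread depends only on the spectrum, and $\widetilde{A}=U(A_1\oplus A_2)U^*$ is unitarily equivalent to $A_1\oplus A_2$, we have $\sprm(\widetilde{A})=\sprm(A_1\oplus A_2)$. The sizes match on the nose: $s(A_1-A_2)\in\R^n$ and $\sprm(A_1\oplus A_2)\in\R^{[2n/2]}=\R^n$, so no extension by zeros (in the sense of Remark \ref{rem acuerdos}) is needed. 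Combining these observations yields Eq.~\eqref{Peron}.

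There is essentially no obstacle here: the entire content is encapsulated in Theorem \ref{vale con 2 gral}, and the only creative step is recognizing that the ``sum/difference'' Hadamard conjugation moves $A_1-A_2$ into the anti-diagonal block of a matrix unitarily equivalent to $A_1\oplus A_2$. This is a standard trick (it is the same device used to deduce the companion of Zhan's inequality from Tao's theorem), and the rest is just bookkeeping with sizes and unitary invariance of the spectrum.
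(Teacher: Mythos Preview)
Your proof is correct and is essentially identical to the paper's own argument: the paper also conjugates $A_1\oplus A_2$ by a $2\times 2$ block Hadamard-type unitary (it uses $Z=\frac{1}{\sqrt{2}}\left(\begin{smallmatrix} I & I\\ -I & I\end{smallmatrix}\right)$ rather than your $U$, but the resulting matrix $T=\widetilde{A}$ is the same), and then applies Theorem \ref{vale con 2 gral} to its off-diagonal block $\tfrac{1}{2}(A_1-A_2)$.
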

\proof
We assume that $A_1\coma A_2 \in \cH(n)$. Let  
$$
Z = \frac1{\sqrt{2}} \ \bm{cc}I&I\\-I&I\em  
\in \cU(2n) \py 
T=\frac 1 2 \  \bm{cc}A_1+A_2 &A_1-A_2\\ A_1-A_2&A_1+A_2 \em \in \cH(2n) \ .
$$
Then  
$ZTZ^* = A_1\oplus A_2$; hence, by Theorem \ref{vale con 2 gral} we see that  $s(A_1-A_2) \prec_w \sprm(T) = \sprm(A_1\oplus A_2 )$.
\qed

\medskip

\begin{rem}\label{rem 213} \rm
 Let $A_1,\, A_2\in\cM_n(\C)^+$. As we have already mentioned, in this case we have that 
$\sprm_i(A_1\oplus A_2)\leq s_i(A_1\oplus A_2)$, for $i\in\I_n\,$. Using this fact and Theorem \ref{lem prelim1} we get that 
$$
N(A_1-A_2)\leq N(D_{\sprm (A_1\oplus A_2)})\leq N(D_{(s_i(A_1\oplus A_2))_{i\in\I_n}})\ ,
$$
for every u.i.n. N. On the other hand, we point out that the entry-wise inequalities 
$$
s_i(A_1-A_2) \le \sprm_i \,(A_1\oplus A_2) \peso{for} i \in \I_n \ ,
$$ are false, even in the positive semidefinite case.
Indeed, take $A_1 = \bm{cc} 3&2\\2&3\em $ and $A_2 = 3 \,I$. Then 
$s(A_1-A_2) = (2\coma 2)$ but 
$$
\la\,(A_1\oplus A_2) =  (5\coma 3\coma 3\coma 1) \implies 
\sprm \,(A_1\oplus A_2) = 
 (4\coma 0) \, . 
$$
We point out that the upper bounds obtained in Theorems \ref{vale con 2 gral} and \ref{lem prelim1}
are vectors that are invariant by translations $M\mapsto M+\la\,I$ for the matrices $M$ involved, 
as opposed to previous upper bounds that are based on singular values, i.e. Eqs. \eqref{Tao} and \eqref {eq zhan1}. Since the vectors that are being bounded in these theorems are also invariant under the corresponding translations, we consider that the upper bounds in terms of the spread are particularly well suited in this context. 

\pausa Finally, we point out that the inequality in Eq. \eqref{Peron} is sharp; indeed, take an arbitrary $A_1\in\matposn$ and let $A_2=-A_1\in \cH(n)$. Then $s(A_1-A_2)=2\,\la(A_1)=\sprm(A_1\oplus A_2)$.
\EOE
\end{rem}

\medskip

\begin{rem}
Let $\cS,\,\cT\subset \C^n$  be $k$-dimensional subspaces and let 
$S,\,T\in \cM_{n,k}(\C)$ and $S_\perp \in \cM_{n\coma n-k}(\C)$ be isometries with ranges 
$R(S)=\cS$, $R(T)=\cT$and  $R(S_\perp)=\cS\orto$.  
The principal angles $\Theta(\cS\coma \cT)=(\theta_j)_{j\in\I_k}\in ([0,\pi/2]^k)\da$ between $\cS$ and $\cT$ are defined by 
\beq\label{angles}
\cos(\theta_j)=s_{k-j+1}(S^*T) \peso{or also by} 
\sin (\theta_j)=s_{j}(T^*S_\perp)  \peso{for} j\in\I_k\  ,
\eeq
(see, for example, \cite{QZLi} for details). 
The principal angles $\Theta(\cS,\cT)$ completely describe the relative position
of these subspaces. On the other hand, they provide a natural 
notion of distance in the Grassmann manifold of all $k$-dimensional subspaces in $\C^n$.

\pausa In \cite{DavKah} Davis and Kahan introduce the fundamental notion of direct 
rotation from $\cS$ onto $\cT$. Briefly, a unitary $U\in \cU(n)$ is a direct rotation
from $\cS$ to $\cT$ if $U\cS=\cT$ and there exist $C_0\in M_k(\C)^+$, $C_1\in M_{n-k}(\C)^+$ and $S_{0}\in \cM_{n-k,k}(\C)$ such that 
$$
W^*U\,W=\bm {cc} C_0& -S_0^*\\  S_0 & C_1 \em\in \cU(n)
$$ for some unitary matrix $W\in\cU(n)$, whose first $k$ columns form an orthonormal basis of $\cS$.
In this case we can write $U=e^{i\,Z}$, where $Z\in\cH(n)$ is such that 
$$
\la(Z)=(\Theta(\ese\coma \ete)^*,-\,\Theta(\ese\coma \ete)^*,0)\da\in (\R^n)\da 
\ ,
$$ 
where $\Theta(\ese\coma \ete)^*=(\theta_i)_{i\in \I_r}\in ((0,\pi/2]^r)\da $
denotes the vector with the positive principal angles, and hence $r\leq [n/2]$.
Direct rotations enjoy some extremal properties that play a central role in the study of metric properties in the Grassmann manifold of all $k$-dimensional subspaces in $\C^n$. 
Indeed, Davis and Kahan showed in \cite{DavKah} that if $V\in\cU(n)$ is such that 
$V\,\cS=\cT$ and $U$ is a direct rotation from $\cS$ onto $\cT$ then
\beq\label{eq desi ver de DK}
2\,\sin(\theta_j/2)=s_j((1-U)|_\cS)\leq s_j((1-V)|_\cS) \peso{for} j\in\I_k\,.
\eeq
On the other hand, if $X\in\cH(n)$ is such that $\la_j(X)\in [-\pi,\pi]$, for $j\in\I_n$, and $V=e^{-iX}$ then, the interlacing inequalities show that $s_j((1-V)|_\cS)\leq s_j(1-V)$, for $j\in\I_k$.
Since $\la_j(X)/2\in[-\pi/2,\pi/2]$ we see that $|\sin(\la_j(X)/2)|=\sin(|\la_j(X)|/2)$; then, straightforward computations show that $s_j(1-V)=2\,\sin(s_j(X)/2)$, for $j\in\I_n$.
Hence, from Davis and Kahan results it can be deduced that 
\beq\label{eq desi ver deb de DK}
\theta_j = s_j(Z) \leq s_j(X) \peso{for} j\in\I_k\,.\eeq 
The following result is related to  the inequalities in Eq. \eqref{eq desi ver deb de DK} 
in a similar sense as previous cases: better bounds (by Proposition \ref{pro com spr y s}) 
with respect to a weaker order (weak majorization instead of entrywise inequalities). 	\EOE
\end{rem}

\begin{teo} \label{angsprm}
Let $\ese,\ete \inc \C^n$ be subspaces 
and let $X\in \cH(n)$ be such that $e^{iX}\ese=\ete$. Then
\rm
\beq \label{eq conjp3}
 \Theta(\cS\coma \cT) \prec_w \frac12 \,\spr^+(X) 
\ .
\eeq
\end{teo}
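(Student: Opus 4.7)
The plan is to reduce Theorem \ref{angsprm} to the key inequality of Theorem \ref{vale con 2 gral} via an intermediate bound that relates the principal angles to the off-diagonal block of $X$.

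First I would decompose $X$ with respect to $\C^n=\cS\oplus\cS\orto$:
$$
X=\bm{cc}X_1 & B \\ B^* & X_2 \em\barr{c}\cS\\ \cS\orto\earr.
$$
Applying Theorem \ref{vale con 2 gral} directly to $X$ yields $2\,s(B)\prec_w \sprm(X)$. By transitivity of $\prec_w$, the theorem will follow once I have the intermediate bound
\beq\label{plan-key}
\Theta(\cS,\cT)\prec_w s(B),
\eeq
since then $2\,\Theta\prec_w 2\,s(B)\prec_w \sprm(X)$.

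To attack \eqref{plan-key} I would use that $U=e^{iX}$ satisfies $U\cS=\cT$, so $\sin\theta_j=s_j(U_{21})$ where $U_{21}=P_{\cS\orto}e^{iX}P_\cS$. From the identity
$$
P_\cT-P_\cS=i\int_0^1 e^{itX}\,[X,P_\cS]\,e^{-itX}\,dt,
$$
convexity of Ky Fan $m$-norms under integration together with unitary invariance of singular values give $s(P_\cT-P_\cS)\prec_w s([X,P_\cS])$. The commutator is $[X,P_\cS]=\bm{cc}0 & -B \\ B^* & 0\em$, and Proposition \ref{hat trick como en el futbol} identifies its singular values as $(s_j(B))$ each repeated twice; likewise the nonzero singular values of $P_\cT-P_\cS$ are the $\sin\theta_j$ each repeated twice (compute the eigenvalues of $P_\cT-P_\cS$ block-by-block in the CS decomposition). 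Pairing the sorted doubled sequences produces the weaker bound $\sin\Theta\prec_w s(B)$.

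The main obstacle is the upgrade from $\sin\Theta\prec_w s(B)$ to the sharper $\Theta\prec_w s(B)$: since $\arcsin$ is expansive, Jordan's inequality only produces an extraneous factor of $\pi/2$ that would spoil the sharp constant in the theorem. I expect this step to require either the Davis--Kahan half-angle identity $2\sin(\theta_j/2)=s_j((1-U_0)|_\cS)$ for the direct rotation $U_0=e^{iZ}$ (for which $Z=\bm{cc}0 & B_0 \\ B_0^* & 0\em$ with $s(B_0)=\Theta^*$), combined with an argument comparing $B$ and $B_0$ through the block-diagonal factor $V=e^{-iZ}e^{iX}$ in the decomposition $e^{iX}=e^{iZ}V$; or a more refined integral/geodesic argument along the Grassmannian path $t\mapsto e^{itX}\cS$ that directly bounds the angles rather than their sines, exploiting the translation invariance $X\mapsto X+c\,I$ of both $\cT$ and $\sprm(X)$ to normalize the spectrum of $X$. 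Once \eqref{plan-key} is established, combining it with the opening step immediately gives $\Theta(\cS,\cT)\prec_w \tfrac{1}{2}\sprm(X)$.
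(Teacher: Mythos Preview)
Your reduction is exactly right: the whole theorem boils down to the intermediate bound $\Theta(\cS,\cT)\prec_w s(B)$, after which Theorem \ref{vale con 2 gral} finishes the job. The gap you flag is genuine, though: from the integral identity you only get $\sin\Theta\prec_w s(B)$, and there is no general way to strip the sine without losing the constant. Your first suggested fix (comparing $B$ with the off-diagonal block $B_0$ of the direct rotation) does not obviously go through, since $e^{iX}=e^{iZ}V$ with $V$ block-diagonal gives no clean submajorization between $s(B)$ and $s(B_0)$.

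The paper closes the gap via your second suggestion, the geodesic argument, and it is worth seeing how cleanly it works. Set $\cT(t)=e^{itX}\cS$. The triangle inequality for principal angles from \cite{QZLi} gives
\[
\Theta(\cS,\cT)\prec_w \sum_{j=0}^{m-1}\Theta\big(\cT(\tfrac{j}{m}),\cT(\tfrac{j+1}{m})\big),
\]
and since $e^{itX}$ is a one-parameter unitary group each summand equals $\Theta(\cS,\cT(\tfrac1m))$. For this small step one has $\Theta(\cS,\cT(\tfrac1m))=\sin\Theta(\cS,\cT(\tfrac1m))+O_1(m)$ and $\sin\Theta(\cS,\cT(\tfrac1m))=s(S^*e^{iX/m}S_\perp)=\tfrac1m\,s(B)+O_2(m)$, with $m\,O_i(m)\to 0$. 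Summing $m$ copies and letting $m\to\infty$ yields exactly $\Theta(\cS,\cT)\prec_w s(B)$. The point is that the $\sin\to\mathrm{id}$ upgrade is free at the infinitesimal scale, and the triangle inequality lets you integrate it back up; no normalization of the spectrum of $X$ is needed. Once you have this, your opening step ($2\,s(B)\prec_w\sprm(X)$) completes the proof.
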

\proof 
Let $S\in \cM_{n,k}(\C)$ 
be an isometry such that $R(S) = \ese$. 
We consider the smooth curve $T(\cdot):[0,1]\rightarrow M_{n,k}(\C)$ 
given by $T(t)= e^{i\,t\,X}S$, for $t\in [0,1]$; we also set 
$\cT(t)=R(T(t))\inc\C^n$, for $t\in [0,1]$. Notice that 
$T(0)=S$, $\cT(0)=\ese$ and  $\cT(1)=\cT$. Since each $T(t)$ is an isometry, 
the function $\Theta(\cdot):[0,1]\rightarrow [0,\pi/2]^k$ given by 
$\Theta(t)=\Theta(\ese\coma \ete(t)) = \arccos (s\ua (S^*T(t)\,)\,) $ for $t\in [0,1]$, is continuous and $\Theta(0)=0$. Then, as a consequence of the triangle inequality for principal angles \cite[Theorem 1]{QZLi} we see that 
\beq\label{eq desi trian ang}
\barr{rl}
\Theta(\ese\coma \ete)&\prec_w \suml_{j=0}^{m-1} \Theta(\ete(\frac j m)\coma \ete (\frac {j+1} m))
\earr
\eeq
Notice that $T(t+h)=e^{i\,t\,X}\,T(h)$ with $e^{i\,t\,X}\in\cU(n)$, for $t,\,h,\,t+h\in [0,1]$; thus, we see that $\Theta(\ete(\frac j m)\coma \ete (\frac {j+1} m))=\Theta(\ete(0)\coma \ete (\frac {1} m))$, for each $j\in\I_{n-1}$. Next, we show that 
\beq\label{eq ese etedeene}
\barr{rl}
\Theta(\ese\coma \ete(\frac 1m))&
\prec_w \frac {1} {2m} \ \sprm(X)+ O(m) \peso{with} \lim\limits_{m\rightarrow \infty} m\,O(m)=0\,.
\earr
\eeq 
Indeed, since $\frac{d}{dt}\sin(t)|_{t=0}=1$ and $\sin(0)=0$ we see that 
\beq\label{eq ang vs sen1}
\barr{rl}
\Theta(\cS,\, \ete(\frac{1}{m}))&
=\sin(\Theta(\cS,\, \ete(\frac{1}{m})))+O_1(m)
\peso{with}\lim\limits_{m\rightarrow \infty} m\,O_1(m)=0\,.
\earr
\eeq
Let $S_\perp \in \cM_{n,n-k}(\C)$ be an isometry such that $R(S_\perp) = \ese\orto$. 
For every $m \in \N$, since $T(\frac 1m )$ is an isometry, 
then Eq. \eqref{angles} assures that 
$$ 
\barr{rl}
\sin (\Theta(\cS\coma \cT(\frac 1 m)))&
=s(T(\frac 1 m) ^*S_\perp)=s(S\, e^{\frac {i}{m}\,X}\, S_\perp)\in ([0,1]^{k})\da\,.
\earr
$$
Since $S\, e^{i\,t\,X}\, S_\perp|_{t=0}=0$, and $\frac{d}{dt}(S\, e^{i\,t\,X}\, S_\perp)|_{t=0}=i\,S\,X\,S_\perp$ then, we have that 
\beq\label{eq ang vs sen2} 
S\, e^{\frac i m \,X}\, S_\perp =
\frac i m \,  S\, X\, S_\perp  + O_2(m) \peso{with} \lim_{m\rightarrow \infty}m\,O_2(m)=0\,.
 \eeq
Eqs. \eqref{con 2 gral}, \eqref{eq ang vs sen1} and \eqref{eq ang vs sen2} together with 
Weyl's inequality (Theorem \ref{teo ag}) 
imply that 
$$
\barr{rl}
\Theta(\cS,\, \ete(\frac{1}{m}))&
\prec_w \frac 1{2m} s\big(\, 2\, S\, X\, S_\perp \,\big)  + s\big(\,O_2(m) \,\big) + O_1(m) \\&\\ 
&\stackrel{\eqref{con 2 gral}}{\prec_w} \frac{1}{2m}\, \sprm(X) + O(m)\,,
\earr
$$
which shows that Eq. \eqref{eq ese etedeene} holds. 
Then, by Eq. \eqref{eq desi trian ang} we get that 
$$
\Theta(\ese\coma \ete)\prec_w \frac 1 2 \,\sprm(X)+ m\, O(m)\,.
$$ The result now follows by taking the limit when $m\rightarrow \infty$.
\qed

\pausa As a consequence of Theorem \ref{angsprm} we strengthen \cite[Theorem 6]{QZLi}.

\begin{cor}
Let $\ese,\ete \inc \C^n$ be $k$-dimensional subspaces 
and let $X\in \cH(n)$ be such that $e^{iX}\ese=\ete$. If $U=e^{i\,Z}\in\cU(n)$ is a direct rotation from $\ese$ onto $\ete$, for $Z\in\cH(n)$, then
$$ s(Z)\prec_w \frac 1 2 |\spr (X)|\prec_w s(X)\,.$$
\end{cor}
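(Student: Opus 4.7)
The plan is to combine the structural description of direct rotations given in the preceding remark with Theorem~\ref{angsprm} for the first inequality, and to invoke Proposition~\ref{pro com spr y s} for the second.

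First I would record that, since $U=e^{iZ}$ is a direct rotation from $\ese$ onto $\ete$, the spectrum of $Z$ has the symmetric form
$$
\lambda(Z)=(\Theta(\ese,\ete)^*\coma -\Theta(\ese,\ete)^*\coma 0)\da,
$$
where $\Theta(\ese,\ete)^*=(\theta_i)_{i\in\I_r}$ is the vector of positive principal angles and $r\le[n/2]$. Taking absolute values and rearranging in non-increasing order yields
$$
s(Z)=|\lambda(Z)|\da=(\theta_1,\theta_1,\theta_2,\theta_2,\ldots,\theta_r,\theta_r,0,\ldots,0)\in\R^n,
$$
which, up to padding with zeros, coincides with $(\Theta(\ese,\ete),\Theta(\ese,\ete))\da$.

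Next, Theorem~\ref{angsprm} provides $\Theta(\ese,\ete)\prec_w \tfrac12\sprm(X)$. I would then use the elementary fact that the doubling operation $a\mapsto(a,a)\da$ preserves submajorization: for even indices $\sum_{i=1}^{2j}(a,a)\da_i=2\sum_{i=1}^{j}a_i\da$, while for odd indices $\sum_{i=1}^{2j-1}(a,a)\da_i=\sum_{i=1}^{j-1}a_i\da+\sum_{i=1}^{j}a_i\da$, and both can be bounded by applying $a\prec_w b$ at the indices $j-1$ and $j$. Combined with the identity $|\spr(X)|\da=\sprm(X\oplus X)=(\sprm(X),\sprm(X))\da$ recalled in Remark~\ref{rem cosas elemen sobre spread}, this gives
$$
s(Z)\prec_w \tfrac{1}{2}(\sprm(X),\sprm(X))\da=\tfrac{1}{2}|\spr(X)|\da,
$$
with sizes matched via the convention in Remark~\ref{rem acuerdos}.

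Finally, the second inequality $\tfrac12|\spr(X)|\prec_w s(X)$ is exactly Proposition~\ref{pro com spr y s} applied with $A=X$, so chaining the two relations proves the claim. The genuine content has already been absorbed into Theorem~\ref{angsprm}; what remains is purely bookkeeping, so I do not anticipate any real obstacle beyond carefully aligning the sizes of the vectors involved and handling the trivial doubling principle for submajorization.
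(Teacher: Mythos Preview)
Your proposal is correct and follows essentially the same route as the paper's proof: both identify $s(Z)$ with the doubled vector of positive principal angles, apply Theorem~\ref{angsprm}, pass to the doubled form $(\sprm(X),\sprm(X))\da=|\spr(X)|\da$, and then invoke Proposition~\ref{pro com spr y s}. The only difference is that you spell out the ``doubling preserves submajorization'' step explicitly (handling the even and odd partial sums separately), whereas the paper simply asserts the resulting inequality in one line.
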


\proof If $U\in\cU(n)$ is a direct rotation from $\ese$ onto $\ete$ then, we have seen that 
$U=e^{i\,Z}$ for $Z\in\cH(n)$ such that 
$$\la(Z)=(\Theta(\ese\coma \ete)^*,-\,\Theta(\ese\coma \ete)^*,0)  \ ,
\peso{where} \Theta(\ese\coma \ete)^*=(\theta_i)_{i\in\I_r}\in \R^r
$$ 
denotes the vector with the positive principal angles, for some $r\leq [n/2]$. 
Hence, $s(Z)=(\Theta(\ese\coma \ete)^*,\Theta(\ese\coma \ete)^*,0)\da\in (\R^n)\da$.
On the other hand, by Theorem \ref{angsprm} we get that 
$$s(Z)=
(\Theta(\ese\coma \ete)^*,\Theta(\ese\coma \ete)^*,0)\prec_w 
\frac 1 2 (\sprm(X)\coma \sprm(X))\da =\frac 1 2 |\spr (X)| \da \,.
$$ By Proposition \ref{pro com spr y s} we get that $\frac 1 2 |\spr (X)|\prec_w s(X)$ and the result now follows from these two submajorization relations.
\qed

\begin{rem}\rm 
Let $A = \bm {cc} a_1&b\\  \bar{b} & a_2 \em\in \cH(2)$. Then 
\beq\label{spr 2x2}
\sprm(A) =  \la_1(A)-\la_2(A) = \big[ \,(\tr\, A)^2  -4\,\det \, A \big]\rai = 
\big(\, (a_1-a_2)^2 + 4|b|^2\,\big)\rai \in \R_{\ge 0}\ .
\eeq
More generally, if we consider
$A =\bm{cc}A_1&B \\B^*&A_2 \em \barr {c}\C^k\\ \C^k\earr \in \cH(2k)\,,$ it is 
natural to wonder whether an inequality of the form
\beq\label{conj tru}
\la\left(\big[\, (A_1-A_2)^2 +4 \, B^*B\big]\rai\right) \prec_w \sprm(A) \ 
\eeq holds true. Notice that this inequality would be an improvement of both 
Theorems \ref{vale con 2 gral} and \ref{lem prelim1}. 
Nevertheless, it turns out that Eq. \eqref{conj tru} does not hold in general.
For example, 
$$
A= \bm{cccc}
1 &    2 &   1 &    2 \\
2 &    1 &   1 &    0 \\
1 &    1 &   2 &    0 \\
2 &    0 &   0 &    2 \em \barr {c}\C^2\\ \ \\\C^2\earr   
\peso{has} \sprm (A)=(6.2714 \coma 1.6339) \ ,
$$
and $\la\left(\big[\, (A_1-A_2)^2 +4 \, B^*B\big]\rai\right)  =(4.7599 \coma 3.3680)$. 
But 
$$
\tr \,\sprm (A) = 7. 9053 < 8.1279  = \tr\left(\big[\, (A_1-A_2)^2 +4 \, B^*B\big]\rai\right) \ . 
$$
In what follows we state a weak version of Eq. \eqref{conj tru} that holds true in the general case.
\EOE
\end{rem}

\medskip

\begin{pro}\label{pro ver debil de desi caso cuadrado1}
Let $A =\bm{cc}A_1&B \\B^*&A_2 \em \barr {c}\C^k\\ \C^k\earr \in \cH(2k)$. Then \rm
\beq\label{eq weak version1}
\la \big(\,(A_1-A_2)^2 +4 \,\text{Re}(B)^2\,\big) \prec_w \spr^2(A) 
\ .
\eeq
\end{pro}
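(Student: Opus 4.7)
The plan is to mimic the proof of Theorem \ref{lem prelim1}: produce a unitary conjugate of $A$ whose off-diagonal block $E$ packages the information of $M=(A_1-A_2)^2 + 4\,\text{Re}(B)^2$, and then apply the key submajorization inequality from Theorem \ref{vale con 2 gral}.

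Concretely, I would consider the unitary
$$
U = \frac{1}{\sqrt{2}}\begin{pmatrix} I_k & I_k \\ iI_k & -iI_k \end{pmatrix} \in \cU(2k),
$$
and set $X=(A_1-A_2)/2$, $Y=\text{Re}(B)$, $E=X+iY$. A routine block computation gives
$$
U^*AU = \begin{pmatrix} \frac{A_1+A_2}{2} - \text{Im}(B) & E^* \\ E & \frac{A_1+A_2}{2} + \text{Im}(B)\end{pmatrix} \in \cH(2k).
$$
Since $U^*AU$ is unitarily equivalent to $A$, $\sprm(U^*AU) = \sprm(A)$, so Theorem \ref{vale con 2 gral} applied to this block matrix yields $2\,s(E) \prec_w \sprm(A)$; as $t \mapsto t^2$ is convex and nondecreasing on $[0,\infty)$, this squares to $4\,s(E)^2 \prec_w \sprm(A)^2$.

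The second ingredient is the algebraic identity
$$
E^*E + EE^* = (X-iY)(X+iY) + (X+iY)(X-iY) = 2X^2 + 2Y^2 = \tfrac{1}{2}\,M,
$$
that is, $M = 2(E^*E + EE^*)$ as a sum of two positive semidefinite matrices with $\la(E^*E)=\la(EE^*)=s(E)^2$. Lidskii's additive inequality (Theorem \ref{teo ah}) then yields
$$
\la(M) = \la\big(2E^*E + 2EE^*\big) \prec 2\la(E^*E) + 2\la(EE^*) = 4\,s(E)^2.
$$

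Chaining these bounds with the elementary observation that $\sprm(A)^2 \prec_w \spr^2(A)$ --- which holds because, for $A\in\cH(2k)$, $\spr^2(A)^\downarrow$ consists of each entry of $\sprm(A)^2$ repeated twice --- gives
$$
\la(M) \prec 4\,s(E)^2 \prec_w \sprm(A)^2 \prec_w \spr^2(A),
$$
as required. The main subtlety is pinpointing the right unitary $U$ in the first step: the natural rotation $V=\frac{1}{\sqrt{2}}\bigl(\begin{smallmatrix} I & I \\ I & -I\end{smallmatrix}\bigr)$ used in Theorem \ref{lem prelim1} places $\text{Im}(B)$ (rather than $\text{Re}(B)$) in the off-diagonal block, so one must first twist by $W=I_k\oplus iI_k$ (which sends $B\mapsto iB$, interchanging $\text{Re}$ and $\text{Im}$ up to sign) before applying $V$; the combined unitary is $U=WV$.
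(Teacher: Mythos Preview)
Your proof is correct and takes a genuinely different route from the paper's. The paper does not invoke Theorem~\ref{vale con 2 gral} here at all: it conjugates $A$ by the real rotation $U=\bigl(\begin{smallmatrix}0&I\\-I&0\end{smallmatrix}\bigr)$, forms the difference $D=A-U^*AU=\bigl(\begin{smallmatrix}A_1-A_2&2\,\text{Re}(B)\\2\,\text{Re}(B)&A_2-A_1\end{smallmatrix}\bigr)$, applies Weyl's inequality to get $\la(D)\prec\spr(A)$, and then notes that $D^2$ has $M=(A_1-A_2)^2+4\,\text{Re}(B)^2$ on both diagonal blocks, so pinching (Eq.~\eqref{pinch}) plus squaring gives $(\la(M),\la(M))^\downarrow\prec\la(D^2)\prec_w(\sprm(A)^2,\sprm(A)^2)$. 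You instead realize $M$ as $2(E^*E+EE^*)$ for the off-diagonal block $E=X+iY$ of a complex unitary conjugate of $A$, and then invoke the key inequality $2\,s(E)\prec_w\sprm(A)$ followed by squaring and Weyl. Your argument thus exhibits the proposition as a direct corollary of Theorem~\ref{vale con 2 gral}, in line with the paper's global theme of deriving everything from that key inequality; the paper's argument for this particular proposition is more self-contained. One minor labeling point: the step $\la(2E^*E+2EE^*)\prec 2\la(E^*E)+2\la(EE^*)$ is Weyl's additive inequality (item 1(a) in Theorem~\ref{teo ah}), not Lidskii's, though both appear in the theorem you cite.
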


\proof
We conjugate by the unitary matrix $U = \bm{cc}0&I\\-I&0\em \in \cU(2k)$,  
$$
U^*AU = \bm{cc}A_2&-B^*\\-B&A_1 \em \peso{and get} 
D\igdef A-U^*AU = \bm{cc}A_1-A_2& 2\,\text{Re}(B)\\ 2\,\text{Re}(B) &A_2-A_1 \em \ . 
$$
By Weyl's inequality (item 1 in Theorem \ref{teo ah}),  
$$
\la(D) \prec \la(A)+\la(-A)=
(\spr(A)\coma -\spr\ua(A)\,)\in (\R_{\ge 0}^{2k})\da \ .
$$ 
Note that 
$$
D^2 = \bm {cc}(A_1-A_2)^2 +4 \,\text{Re}(B)^2 &* \\ * &(A_1-A_2)^2 +4 \,\text{Re}(B)^2 \em \ .
$$
If $E\igdef (A_1-A_2)^2 +4 \text{Re}(B)^2$, then using 
Remark \ref{convfunction} 
and Theorem \ref{teo ah} (see the Appendix),  
\beq\label{con E}
\left( \la(E)\coma \la(E)\,\right)\da  
\stackrel{\eqref{pinch}}{\prec} \la(D^2) \prec_w 
(\sprm(A)^2,\sprm(A)^2)\,,
\eeq
since $t \mapsto t^2 $ is a convex map. 
Clearly, Eq. \eqref{eq weak version1} follows from Eq. \eqref{con E}. 
\qed

\pausa We point out that Proposition \ref{pro ver debil de desi caso cuadrado1} does not imply neither 
Theorem \ref{vale con 2 gral} nor Theorem \ref{lem prelim1} (since the relation $\prec_w$ is not  preserved by taking square roots).

\def\Par{\big(\,}
\def\Pal{\big)\,}

\section {Reformulations of the key inequality}

In what follows we obtain a series of results that are consequences of the key inequality (Theorem \ref{vale con 2 gral}). Then we show that the key inequality is actually equivalent to several of these derived inequalities (see Theorem \ref{teo muchas equivalencias} below).

\subsection{Generalized commutators and unitary conjugates}\label{sec 41}

We begin this section with the following inequality for singular values of generalized commutators in terms of the spectral spread.

\begin{teo}\label{la conj}
Let $A_1\coma A_2 \in \cH(n)$ and $X\in \matn$. Then
\beq\label{conj}
s(A_1 \, X-XA_2)\prec_w s(X)\ \sprm(A_1\oplus A_2)\,.
\eeq
\end{teo}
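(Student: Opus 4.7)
The plan is to reduce Theorem \ref{la conj} to the key inequality (Theorem \ref{vale con 2 gral}) through two stages: an SVD reduction that normalizes $X$, followed by a path-based argument in the spirit of the proof of Theorem \ref{angsprm}.

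First I would perform an SVD reduction. Writing $X = V\Sigma W^*$ with $V, W \in \cU(n)$ unitary and $\Sigma = D_{s(X)}$ diagonal, the unitary conjugates $\tilde A_1 = V^*A_1V$ and $\tilde A_2 = W^*A_2 W$ are Hermitian with the same spectra as $A_1, A_2$. Since
$$s(A_1 X - X A_2) = s(\tilde A_1\,\Sigma - \Sigma\,\tilde A_2), \quad \sprm(A_1 \oplus A_2) = \sprm(\tilde A_1 \oplus \tilde A_2), \quad s(X) = s(\Sigma),$$
we may assume without loss of generality that $X = \Sigma$ is a nonnegative diagonal matrix. This alignment is crucial to pair, later, each $s_i(X)$ with the correct $\sprm_i(A_1\oplus A_2)$.

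The core idea is to realize $A_1 X - X A_2$ as an infinitesimal off-diagonal block of a unitary conjugate of $M = A_1 \oplus A_2 \in \cH(2n)$. Taking $K = \begin{pmatrix} 0 & X \\ X^* & 0 \end{pmatrix} \in \cH(2n)$, consider the one-parameter family $U_t = \exp(itK) \in \cU(2n)$ and set $M_t = U_t M U_t^*$. Since $M_t$ is unitarily equivalent to $M$ for every $t$, the key inequality applied to the $2\times 2$ block structure of $M_t$ gives
$$2\, s(B(t)) \prec_w \sprm(A_1 \oplus A_2) \fe t \ge 0,$$
where $B(t)$ is the upper-right $n\times n$ block of $M_t$. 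A direct series expansion shows that $B(0)=0$ and $\dot B(0) = -i(A_1 X - X A_2)$, so a naive limit $t\to 0$ would only yield a trivial bound. To recover the factor $s(X)$, I would mimic the proof of Theorem \ref{angsprm} and discretize: partition $[0,1]$ into $m$ equal subintervals and write the off-diagonal block of $M_{j/m}$ telescopically. Each incremental conjugation $M_{(j+1)/m} = U_{1/m} M_{j/m} U_{1/m}^*$ produces a small perturbation whose relevant off-diagonal piece is, to leading order, $-\frac{i}{m}\,V(\tilde A_1 \Sigma - \Sigma \tilde A_2)W^*$, and to which the key inequality (together with the SVD alignment) applies with the sharper factor $\frac{1}{m}\,s(X)$. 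Summing via the triangle inequality for Ky Fan norms (equivalently, for $\prec_w$) and letting $m\to \infty$ should yield the desired bound.

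The main obstacle is the bookkeeping that extracts the precise entry-wise product $s_i(X)\,\sprm_i(A_1 \oplus A_2)$ on the right-hand side, rather than coarser bounds such as $\|X\|\,\sprm(A_1\oplus A_2)$ (which is what a single Cayley-type conjugation $Z = \begin{pmatrix} (I+XX^*)^{-1/2} & (I+XX^*)^{-1/2}X \\ -X^*(I+XX^*)^{-1/2} & (I+X^*X)^{-1/2} \end{pmatrix}$ applied to $M$ would produce after an optimization in the scaling of $X$). The SVD step together with the $m\to\infty$ limit are what allow the diagonal $\Sigma = D_{s(X)}$ to appear aligned with the spectral spread coordinates. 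Proposition \ref{spreadskii aditivo} (Lidskii-type additivity for the spectral spread) and Proposition \ref{pro com spr y s} will likely play supporting roles in controlling the error terms as $m\to\infty$.
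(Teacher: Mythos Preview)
Your proposal has a genuine gap: the discretization scheme you sketch cannot manufacture the entry-wise factor $s(X)$ from the key inequality alone. Applying Theorem \ref{vale con 2 gral} to $M_t=U_tMU_t^*$ yields only $2\,s(B(t))\prec_w\sprm(M)$, a bound independent of $X$; and the increment $B((j{+}1)/m)-B(j/m)$ has leading term $\tfrac{-i}{m}$ times a generalized commutator of the \emph{current} diagonal blocks of $M_{j/m}$ with $\Sigma$, so bounding its singular values by $\tfrac{1}{m}\,s(X)\,\sprm(M)$ is exactly the statement you are trying to prove. In the paper this implication runs in the opposite direction: the path technique is used in Theorem \ref{Teotomaestok} to derive the conjugation bound $s(A-U^*AU)\prec_w s(X)\,\sprm(A\oplus A)$ \emph{from} Theorem \ref{la conj}, not to establish it. Note also that your telescoping targets $B(1)$, whereas the quantity to be bounded is the derivative $i\dot B(0)=A_1X-XA_2$; these are different objects, and no amount of $m\to\infty$ converts a bound on the former into one on the latter. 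What you flag as ``bookkeeping'' is in fact the missing structural idea; the SVD alignment does not supply it, because after the reduction the key inequality still sees only $\sprm(M)$ on the right.

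The paper's argument is different in kind. With $B=A_1\oplus A_2$ and $Z=\begin{pmatrix}0&X\\-X^*&0\end{pmatrix}$ one has $BZ-ZB=\begin{pmatrix}0&C\\C^*&0\end{pmatrix}$ for $C=A_1X-XA_2$, so for each $k$ there is a rank-$k$ projection $P$ with $\sum_{j\le k}s_j(C)=\tr\big([BZ-ZB]\,P\big)=\tr\big([PB-BP]\,Z\big)$ by trace cyclicity. This separates the two ingredients: $s(Z)=(s(X),s(X))\da$ carries the factor $s(X)$, while $PB-BP$ is purely anti-diagonal relative to $R(P)$ and its off-diagonal block is controlled via Theorem \ref{vale con 2 gral} by $\tfrac12\,\sprm(B)$. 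The inequality $|\tr(VW)|\le\tr\big(s(V)\,s(W)\big)$ then combines them, and $\rk(PB-BP)\le 2k$ truncates the resulting sum to give the $k$-th Ky Fan bound. No SVD reduction and no limiting procedure enter.
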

\begin{proof}
Consider the matrices $C= A_1X-XA_2 \in \matn$, 
$$
B= A_1\oplus A_2 =\bm{cc}
A_1 & 0 \\ 0 & A_2
\em
\in \cH(2n) \py Z = \bm{cc}
0 & X \\ -X^* & 0
\em
\in i\,\cH(2n) \ .
$$
Note that 
$$
B\ Z -Z \ B =
\bm{cc}
0 & A_1X-XA_2 \\ X^*A_1- A_2X^*  & 0
\em
= \bm{cc}0&C\\C^*&0\em 
\in \cH(2n)
$$
Therefore, by Proposition \ref{hat trick como en el futbol} we get that 
$$
\la \Big(\,B\ Z -Z \ B\,\Big)
= \Big(\, s(C)\coma -s(C)\Big)\,\da\ .
$$
Fix $1\leq k\leq n$. Then, the previous identity shows that 
$$
\sum_{j=1}^ks_j(A_1X-XA_2)=
\sum_{j=1}^k \la\da_j\Big(\, B\ Z -Z \ B\Big)\ .
$$
Then, there exists a projection
$P\in \cH(2n)$ with $\rk\, P = k$ such that 
\beq\label{con tr}
\sum_{j=1}^ks_j(A_1X-XA_2)
=\tr\Big(\,\big[\,B\ Z -Z \ B\,\big]\ P\Big)\,\,.
\eeq
Using that 
$
\tr \Par \big[\,B\ Z -Z \ B\,\big]\ P\Pal= 
\tr \Par \big[\, P\ B -B\ P\,\big]\ Z \Pal \ ,
$
and that $$
|\tr(VW)|\leq \tr \Par s(VW)\Pal\leq \tr \Par s(V)\ s(W) \Pal
$$ for every $V\coma W \in \matrec{2n}$, from Eq. \eqref{con tr} we get 
$$
\sum_{j=1}^ks_j(A_1X-XA_2)\leq 
\tr \Par s(P\ B -B\ P\,) \ s(Z )\Pal \ .
$$
Now, notice that if we consider the block matrix representation (obtained by changing the orthogonal decomposition of $\C^{2n}$)
$$
P=\bm{cc} 
I & 0 \\ 0 & 0
\em
\barr{c} k \\ 2n-k \earr \py
B = A_1\oplus A_2=\bm{cc} 
S_{11} & S_{12} \\ S_{12}^* & S_{22}
\em 
$$
then, using these new representations we have that
$$
P\ B - B\ P
=\bm{cc} 
0 & S_{12} \\ -S_{12}^* & 0
\em 
\ .
$$
Hence 
$
s\Par PB-BP 
\Pal=
\Par s(S_{12}^*)\coma s(S_{12})\Pal\da 
$.  
Thus, by  Theorem \ref{vale con 2 gral},
\beq\label{eq aca se usa 35}
s(S_{12}^*)\prec_w \frac{1}{2}\ \sprm \, B \implies s\Par PB-BP \Pal\prec_w\frac{1}{2}\ 
\Par \sprm \, B \coma \sprm \, B\Pal 
\ .
\eeq
Using these facts, that $s( Z )=\Par s(X)\coma s(X)\Pal\da$ and item 5 in Lemma \ref{lem submaj props1},  we can now see that
\beq\label{acot}
\barr{rl}
s\Par PB-BP\Pal \  s(Z )& \prec_w \frac{1}{2}\ 
\Par \sprm \, B\coma \sprm \, B\Pal\da\  s(Z ) \\&\\
&= \frac{1}{2}\ 
\Par \sprm \, B\  s(X) \coma \sprm \, B\,\  s( X)\Pal \da 
\igdef \rho
\ .\earr
\eeq
Note that $\rk \, P=k\implies \rk \,(PB-BP)\le 2k \implies 
s_{2k+1}\big(\, PB-BP\,\big)  = 0$. 
Then
\begin{eqnarray*}
\sum_{j=1}^ks_j(A_1X-XA_2)&\leq& 
\tr\Par s\Par PB-BP\Pal 
\  s(Z )\Pal  
\\ &=& \sum_{j=1}^{2k} s_j\big(\,PB-BP\,\big) \, s_j(Z )  
\\&\stackrel{\eqref{acot}}{\le} &  \sum_{j=1}^{2k} \rho_j =
\sum_{j=1}^k s_j(X) \ \sprm_j (A_1\oplus A_2) \ , 
\end{eqnarray*}
for every $k \in \I_n\,$. This shows Eq. \eqref{conj}. 
\end{proof}

\begin{rem}\label{rem implicancias 1}
 By inspection of the proof of Theorem \ref{la conj}, we see that this result follows from the application of Theorem \ref{vale con 2 gral} in Eq. \eqref{eq aca se usa 35}. On the other hand, 
Theorem \ref{la conj} extends Theorem \ref{lem prelim1} (by taking $X=I$ in Eq. \eqref{conj}); in particular, the inequality in Eq. \eqref{conj} is sharp (see the end of Remark \ref{rem 213}). In the next section we will see that all these results are actually equivalent. \EOE
\end{rem}

\medskip

\begin{cor}\label{coro raro}
Let $A \coma X  \in \cH(n)$. Then 
\beq\label{sprm X}
s(A X-XA)\prec_w \sprm(X) \  \sprm \,(A\oplus A) 
\ .
\eeq
If $X \in \matn^+$ then also 
\beq\label{medio}
s(A X-XA)\prec_w 
\frac{1}{2}\,\|X\|\,  \sprm \,(A\oplus A)\ .
\eeq
\end{cor}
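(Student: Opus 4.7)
The plan is to derive both inequalities from Theorem \ref{la conj} by applying it after replacing $X$ with a convenient scalar shift $X-\alpha I$. Since $\alpha I$ commutes with $A$, the commutator is unchanged:
$$
AX - XA \;=\; A(X - \alpha I) - (X - \alpha I)\,A\,,
$$
so Theorem \ref{la conj} (with $A_1 = A_2 = A$ and $X$ replaced by $X-\alpha I$) yields
$$
s(AX - XA) \prec_w s(X - \alpha I)\cdot \sprm(A \oplus A)
$$
for every $\alpha \in \R$. The two inequalities in the statement will arise from two different choices of~$\alpha$.

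For Eq. \eqref{sprm X}, I would take $\alpha = \la_{k+1}(X)$ with $k = [n/2]$. Eq. \eqref{s vs spr} then gives the majorization $s(X - \la_{k+1}(X)\,I) \prec \sprm(X)$ (under the zero-padding convention of Remark \ref{rem acuerdos}, since the left-hand side has length $n$ while $\sprm(X)$ has length $k$). The weight vector $\sprm(A \oplus A)\in (\R_{\ge 0}^n)\da$ is nonnegative and nonincreasing, so by item 5 of Lemma \ref{lem submaj props1} (a standard Abel-summation property of weak majorization, already invoked in the proof of Theorem \ref{la conj}) we can multiply through by $\sprm(A\oplus A)$ and preserve $\prec_w$:
$$
s(X - \la_{k+1}(X)\,I)\cdot \sprm(A \oplus A) \prec_w \sprm(X) \cdot \sprm(A \oplus A)\,.
$$
Chaining this with the weak majorization produced by Theorem \ref{la conj} gives Eq. \eqref{sprm X}.

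For Eq. \eqref{medio}, assuming $X \in \matn^+$, I would instead take $\alpha = \|X\|/2$. Then the spectral inclusion $0 \leq X \leq \|X\|\,I$ shifts to $-\frac{\|X\|}{2}\,I \leq X - \frac{\|X\|}{2}\,I \leq \frac{\|X\|}{2}\,I$, hence $s_j\big(X - \frac{\|X\|}{2}\,I\big) \leq \frac{\|X\|}{2}$ for every $j \in \I_n$. Plugging this entrywise bound directly into the weak majorization produced by Theorem \ref{la conj} yields Eq. \eqref{medio}. The only nontrivial point in the whole argument is the size/padding bookkeeping needed to combine Eq. \eqref{s vs spr} with multiplication by $\sprm(A \oplus A)$ in the first part, since $s(X - \la_{k+1}(X)\,I)$ has length $n$ while $\sprm(X)$ has length $[n/2]$; this is handled routinely by the Abel-type argument just described.
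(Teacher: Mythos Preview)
Your proposal is correct and matches the paper's own proof essentially line for line: the paper also shifts $X$ to $Y=X-\la I$, applies Theorem \ref{la conj}, then takes $\la=\la_{k+1}(X)$ and invokes Eq.~\eqref{s vs spr} with item~5 of Lemma~\ref{lem submaj props1} for Eq.~\eqref{sprm X}, and takes $\la=\|X\|/2$ for Eq.~\eqref{medio}. Your remarks on the size/padding bookkeeping are accurate and the paper handles them in the same implicit way.
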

\proof
If we let $Y=X-\la\,I\in\cH(n)$ for some $\la \in \R$, then 
 $AY-Y A=AX-XA$. Hence, by Theorem \ref{la conj} we see that 
$$
s(AX-XA)\prec_w s(X-\la\,I)\  \sprm(A\oplus A)\,.
$$
Take $\la = \la _{k+1}(X)$, for $k = [\frac n2]$. By 
Eq. \eqref{s vs spr} we get that $s(X-\la)\prec_w \sprm(X)$, and Eq. \eqref{sprm X} follows from
these facts together with item 5 in Lemma \ref{lem submaj props1}.
If $X\in\matn^+$ we take $\la = \frac{\|X\|}{2}$, so we have that 
$s(Y) \leqp \|Y\| \uno = \frac{\|X\|}{2} \, \uno $ and hence $s(Y)\prec_w \frac{\|X\|}{2} \, \uno $. This fact together with item 5 in Lemma \ref{lem submaj props1} imply  Eq. \eqref{medio} above. \qed

\medskip

\begin{cor}\label{teo gen Kitt 1} Let $A\coma  B \in \matn^+$ and $X\in\matn$. Then 
\beq \label{con s}
 s(A X-XB)\prec_w s(X)\ s( A \oplus B)\,.
\eeq
In particular we get that, for every unitary invariant norm $N$,  
\beq \label{con nui AB}
N(AX-XB) \le  \|X\| \, N\,( A \oplus B) \ . 
\eeq
\end{cor}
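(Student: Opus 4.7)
The plan is to derive Corollary \ref{teo gen Kitt 1} as a direct consequence of Theorem \ref{la conj}, using the fact that for positive semidefinite matrices the spectral spread is dominated entrywise by the spectrum (equivalently, the singular values).

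\textbf{Step 1.} First I would apply Theorem \ref{la conj} to the Hermitian pair $(A_1,A_2)=(A,B)$, which are in fact in $\matn^+$. This immediately yields
\[
s(A X - X B) \prec_w s(X)\ \sprm(A\oplus B)\ .
\]
No new ingredient is required here; this is just specializing the previous theorem to positive semidefinite data.

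\textbf{Step 2.} The key observation is that $A\oplus B \in \cM_{2n}(\C)^+$, so by Eq. \eqref{spr vs s pos} applied to $M=A\oplus B$ we have
\[
\sprm_i(A\oplus B)\ \le\ \la_i(A\oplus B)\ =\ s_i(A\oplus B)\fe i\in\I_n\ .
\]
Since $s(X)\in\R_{\ge 0}^n$ has non-negative entries, multiplying the above inequality entrywise by $s(X)$ preserves it and gives, with the arithmetic convention that produces vectors of size $\min\{n,2n\}=n$,
\[
s(X)\ \sprm(A\oplus B)\ \leqp\ s(X)\ s(A\oplus B)\ \in\ \R_{\ge 0}^n\ .
\]
Combined with Step 1 and item 5 of Lemma \ref{lem submaj props1} (entrywise dominance of non-negative vectors implies $\prec_w$), this chains to
\[
s(AX-XB)\ \prec_w\ s(X)\ \sprm(A\oplus B)\ \prec_w\ s(X)\ s(A\oplus B)\ ,
\]
which is Eq. \eqref{con s}.

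\textbf{Step 3.} For the inequality with a unitarily invariant norm $N$, I would use $s_i(X)\le \|X\|$ for every $i\in\I_n$, so $s(X)\, s(A\oplus B)\leqp \|X\|\, s(A\oplus B)$ (viewing both sides as vectors in $\R^n$, i.e. the first $n$ coordinates of the $2n$-vector $s(A\oplus B)$). Then submajorization together with the standard fact that $x\prec_w y$ and $y\geq 0$ imply $N(D_x)\le N(D_y)$ for any u.i.n., plus the identity $N(D_{s(A\oplus B)})=N(A\oplus B)$, gives
\[
N(AX-XB)\ \le\ \|X\|\ N(A\oplus B)\ .
\]

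The only subtle point — and the one I would check carefully — is the size bookkeeping: $s(X)\in\R^n$ while $s(A\oplus B)\in\R^{2n}$, and the paper's entrywise product convention truncates to the shorter length. This causes no issue here because the left-hand side $s(AX-XB)$ also lives in $\R^n$, and extending the right-hand vector by zeros does not change its u.i.n.\ value. No deep step is involved; the corollary is essentially Theorem \ref{la conj} plus the elementary majorization $\sprm(M)\prec_w s(M)$ valid for $M\in\matn^+$.
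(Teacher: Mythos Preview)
Your proposal is correct and follows exactly the paper's approach: apply Theorem \ref{la conj}, then use that $A\oplus B\in\cM_{2n}(\C)^+$ gives $\sprm(A\oplus B)\leqp s(A\oplus B)$ (Eq.~\eqref{spr vs s pos}), and chain the submajorization. One small slip: the fact ``entrywise dominance implies $\prec_w$'' is Remark \ref{remxleqy}, not item 5 of Lemma \ref{lem submaj props1}; otherwise the argument (including the norm inequality in Step 3) is fine and in fact more detailed than the paper's one-line justification.
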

\proof
Use Theorem \ref{la conj} and recall that if 
$ C\in \matrec{2n}^+$, then $\spr (C) \leqp \la(C) = s(C)$. \QED

\medskip

\pausa
Let $A,\,X\in\cH(n)$ and let $U:= e^{i\,X}\in\matu$. As mentioned in item 7 in Remark \ref{rem cosas elemen sobre spread} (or as a consequence of Eq. \eqref{Peron} in Theorem \ref{lem prelim1}) we get that
$$ 
s(A- UAU^*)\prec _w \spr^+(A\oplus UAU^*) =  \spr^+(A\oplus A)=|\spr(A)|\, .
$$ 
Nevertheless, in case $X$ is close to a (real) multiple of the identity then
$U=e^{i\,X}$ is close to a multiple of the identity as well; hence, in this case we 
would expect $A$ and $U^*AU$ to be close, too. Similarly, in case $A$ is close to a (real) multiple
of the identity, then we would expect $A$ and $U^*AU$ to be close. The following result provides  quantitative estimates that deal with these situations.

\begin{teo}\label{Teotomaestok} \rm
Let $A,\, X\in \H(n)$ and  $U=e^{i\,X}\in \cU(n)$. Then \rm 
$$
s(A-U^*AU)\prec_w s(X) \  \spr^+ (A\oplus A) \py
s(A-U^*AU)\prec_w s(A) \  \spr^+ (X\oplus X)\,.$$
\end{teo}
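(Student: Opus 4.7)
The plan is to reduce both inequalities to Theorem \ref{la conj} applied to the commutator $[A,X]=AX-XA$, by means of a path/smoothing argument along the unitary curve $U_t=e^{itX}$, $t\in[0,1]$, joining $U_0=I$ to $U_1=U$.

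First I would differentiate $t\mapsto U_t^*AU_t$. Since $\tfrac{d}{dt}U_t=iXU_t=U_t\,iX$, a direct computation gives
\[
\frac{d}{dt}\bigl(U_t^*AU_t\bigr)=i\,U_t^*(AX-XA)U_t = i\,U_t^*[A,X]\,U_t,
\]
and integrating over $[0,1]$ yields the key identity
\[
A-U^*AU \ =\ -i\int_0^1 U_t^*\,[A,X]\,U_t\,dt.
\]

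Next I would establish the intermediate submajorization
\[
s(A-U^*AU)\ \prec_w\ s([A,X]).
\]
This can be obtained either through the integral form of Ky Fan's inequality, $s\bigl(\int_0^1 F(t)\,dt\bigr)\prec_w \int_0^1 s(F(t))\,dt$, applied to $F(t)=-iU_t^*[A,X]U_t$ (whose singular values $s(F(t))=s([A,X])$ are constant in $t$ by unitary invariance of $s(\cdot)$, so the right-hand integral simply equals $s([A,X])$), or by a Riemann-sum argument: setting $V=e^{iX/m}$ and telescoping,
\[
A-U^*AU\ =\ \sum_{j=0}^{m-1} U_{j/m}^*\,(A-V^*AV)\,U_{j/m},
\]
so Ky Fan's inequality (see Lemma \ref{lem submaj props1}) plus unitary invariance gives $s(A-U^*AU)\prec_w m\,s(A-V^*AV)$; since $m(A-V^*AV)\to -i[A,X]$ as $m\to\infty$, the claim follows by passing to the limit in each partial sum.

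Finally I would apply Theorem \ref{la conj} twice to $s([A,X])=s(AX-XA)$. Taking $A_1=A_2=A$ in Eq.~\eqref{conj} gives
\[
s([A,X]) = s(AX-XA)\ \prec_w\ s(X)\,\sprm(A\oplus A),
\]
while taking $A_1=A_2=X$ and viewing $A$ as the ``$X$-variable'' in Eq.~\eqref{conj} gives
\[
s([A,X]) = s(XA-AX)\ \prec_w\ s(A)\,\sprm(X\oplus X).
\]
Composing with the submajorization from the previous step and using transitivity of $\prec_w$ (Lemma \ref{lem submaj props1}) yields the two stated inequalities.

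The main obstacle is the passage to the limit in the intermediate submajorization: one must check that if $f_m\prec_w g_m$ with $f_m\to f$ and $g_m\to g$ entry-wise, then $f\prec_w g$, which follows from continuity of the partial-sum functionals $x\mapsto\sum_{i=1}^k x_i^\downarrow$ but is worth stating explicitly. This technicality is sidestepped entirely if one adopts the integral Ky Fan inequality directly.
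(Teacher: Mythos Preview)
Your proof is correct and follows essentially the same route as the paper: both arguments move along the curve $t\mapsto e^{-itX}Ae^{itX}$, use Weyl/Ky Fan subadditivity together with unitary invariance of singular values to reduce matters to $s(AX-XA)$, and finish by invoking Theorem \ref{la conj}. Your explicit isolation of the intermediate bound $s(A-U^*AU)\prec_w s([A,X])$ and the integral-form Ky Fan option are a slightly cleaner packaging of the same idea, but not a genuinely different approach.
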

\proof We prove the first inequality; the proof of the second inequality is similar 
(it changes only after Eq. \eqref{AX-XA}\,) 
and the details are left to the reader. Let $U(\cdot):[0,1]\rightarrow \cH(n)$ be the smooth function given by 
$A(t)=e^{-i\,t\,X}\,A\,e^{i\,t\,X}$,  for $t\in [0,1]$. Notice that 
$A(0)=A$ and $A(1)=U^*A\,U$; using Weyl's inequality for singular 
values (item 1 in Theorem \ref{teo ah}) 
\beq \label{eq desi weyl en la curva conj unit}
\barr{rl}
s(A-U^*AU)\prec_w \suml_{j=0}^{m-1} s\big(\,A(\frac j m) - A(\frac {j+1} {m}) \,\big)
\peso{for every} m\in \N  \,.
\earr
\eeq 
Notice that $A(t+h)=e^{-i\,t\,X}\,A(h)\,e^{i\,t\,X}$ with $e^{i\,t\,X}\in\cU(n)$, for $t,\,h,\,t+h\in [0,1]$. Thus 
\beq\label{con m}
\barr {rl} s(A(\frac j m) - A(\frac {j+1} {m}))& 
=s(A - A(\frac 1 m) )\peso{for}j\in \I_{m-1}  \ . \earr
\eeq
Since $A-A(0)=0$ and $\frac{d}{dt} A(t)|_{t=0}=i\,(AX-XA)$ we get that 
\beq\label{AX-XA}
\barr{rl}
s(A - A(\frac 1 m) )&
=\frac 1 m \, s(AX-XA)+O(m)\peso{with}\lim\limits_{m\rightarrow \infty} m\,O(m)=0\,.
\earr
\eeq
Hence, by Theorem \ref{la conj}  we have that 
\beq\label{eq el teo 41 se aplica aca}
\barr{rl}
s(A - A(\frac 1 m) )&\prec_w \frac 1 m \, s(X)\, 
\  \, \sprm(A\oplus A)+ O(m)\,. 
\earr
\eeq 
Therefore, by Eq.'s \eqref{eq desi weyl en la curva conj unit} and \eqref{con m} 
we have that, for sufficiently large $m$,
$$
s(A-U^*AU)\prec_w s(X)\, \  \, \sprm(A\oplus A)+  m\,O(m)\,.
$$
The statement now follows by taking the limit $m\rightarrow \infty$ in the expression above.
\qed

\begin{rem}\label{rem implicancias 2}
By inspection of the proof of Theorem \ref{Teotomaestok}, we see that this result follows from the application of Theorem \ref{la conj} in Eq. \eqref{eq el teo 41 se aplica aca}. 
\end{rem}

\subsection {Equivalences of the inequalities}

In this section we show the equivalence of several of the main results obtained in Sections \ref{sec 31} and \ref{sec 41}.

 \begin{teo}\label{teo muchas equivalencias}
The following inequalities are equivalent:
\ben
\item Given $A_1,\, A_2\in\cH(n)$ then 
\beq\label{Peron vuelve}
s(A_1-A_2) \prec_w \sprm \,(A_1\oplus A_2)\ .
\eeq
\item Given $A =\bm{cc}A_1&B \\B^*&A_2 \em \barr {c}\C^n\\ \C^n\earr \in \cH(2n)$ then 
\beq\label{con 2 bis}
2\,s(B) \prec_w \sprm(A) \ .
\eeq
\item 
Given $A_1\coma A_2 \in \cH(n)$ and $X\in \matrec{n}$ then
\beq\label{conj bis}
s(A_1 \, X-XA_2)\prec_w s(X)\ \sprm(A_1\oplus A_2)\,.
\eeq
\item Given $A,\,X\in\matsa$ and $U=e^{i\,X}\in\cU(n)$ then
\beq\label{conj bis2}
s(A-U^*AU)\prec_w s(X) \  \spr^+ (A\oplus A) 
\,. \eeq
\item Given subspaces  $\ese,\ete \inc \C^n$ 
and $X\in \cH(n)$ such that $e^{iX}\ese=\ete$, then
\rm
\beq \label{eq conjp3equiv}
 \Theta(\cS\coma \cT) \prec_w \frac12 \,\spr^+(X) \ .
\eeq
\een
\end{teo}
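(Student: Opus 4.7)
The plan is to close the circle of implications by showing that each of (1), (4), and (5) implies (2), the anti-diagonal block inequality that is the key technical tool of the paper. Four of the remaining directions are already on record: Theorem \ref{lem prelim1} gives (2)$\Rightarrow$(1); Theorem \ref{la conj} gives (2)$\Rightarrow$(3); the proof of Theorem \ref{Teotomaestok} (see Remark \ref{rem implicancias 2}) yields (3)$\Rightarrow$(4); and Theorem \ref{angsprm} gives (2)$\Rightarrow$(5). It therefore suffices to establish (1)$\Rightarrow$(2), (4)$\Rightarrow$(2), and (5)$\Rightarrow$(2). In each case the uniform strategy is to apply the hypothesized inequality to a configuration carefully tailored to the block matrix $A=\bm{cc}A_1&B\\B^*&A_2\em\in\cH(2n)$ from (2), and then unwind.

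For (1)$\Rightarrow$(2), conjugate $A$ by the self-adjoint unitary $V=I_n\oplus(-I_n)\in\cU(2n)$. Since $VAV$ and $A$ share their spectrum, $\sprm(A\oplus VAV)=\sprm(A\oplus A)=(\sprm(A),\sprm(A))\da$, while $A-VAV=\bm{cc}0&2B\\2B^*&0\em$ has singular values $2(s(B),s(B))\da$ by Proposition \ref{hat trick como en el futbol}. Applying (1) to the pair $(A,\,VAV)$ and comparing partial sums at the even indices converts the resulting relation into $2\,s(B)\prec_w\sprm(A)$.

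For (4)$\Rightarrow$(2), introduce the one-parameter family $X_t=t(I_n\oplus(-I_n))\in\cH(2n)$ for small $t>0$, so that $U_t=e^{iX_t}=e^{it}I_n\oplus e^{-it}I_n$ and $s(X_t)=t\,\uno_{2n}$. A direct computation gives $A-U_t^*AU_t=\bm{cc}0&(1-e^{-2it})B\\(1-e^{2it})B^*&0\em$, with singular values $2|\sin t|(s(B),s(B))\da$; then (4) reads $2|\sin t|(s(B),s(B))\da\prec_w t(\sprm(A),\sprm(A))\da$, and dividing by $t$ and letting $t\to 0^+$ yields (2). For (5)$\Rightarrow$(2), take $\cS=\C^n\oplus\{0\}\subset\C^{2n}$, $X_t=tA$, and $\cT_t=e^{iX_t}\cS$; with $S=\bm{c}I_n\\0\em$ and $S_\perp=\bm{c}0\\I_n\em$, the expansion $e^{-itA}=I-itA+O(t^2)$ gives $\sin\Theta(\cS,\cT_t)=s(S^*e^{-itA}S_\perp)=t\,s(B)+O(t^2)$, whence also $\Theta(\cS,\cT_t)=t\,s(B)+O(t^2)$. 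Inserting into (5), dividing by $t$, and taking $t\to 0^+$ produces $s(B)\prec_w\tfrac12\sprm(A)$, which is (2).

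The only technical point is the justification of passing to the limit in a weak majorization after dividing by $t$; this is routine because $\prec_w$ is the conjunction of finitely many partial-sum inequalities on the decreasing rearrangements, and both rearrangement and partial summation are continuous in the entries. The substantive content of the proof lies entirely in the choice of test data, namely a block-sign conjugation $V$ for the algebraic implication (1)$\Rightarrow$(2) together with its infinitesimal analogues $X_t=t(I_n\oplus(-I_n))$ and $X_t=tA$ for the one-parameter implications, each engineered so that the anti-diagonal block $B$ appears to leading order in $t$ in $A-U_t^*AU_t$ or in the angle $\Theta(\cS,\cT_t)$.
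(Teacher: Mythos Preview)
Your proposal is correct and closes the cycle of equivalences cleanly. The implication (5)$\Rightarrow$(2) is essentially the paper's own argument, and your (4)$\Rightarrow$(2) is a direct variant of the paper's (4)$\Rightarrow$(3): both differentiate a one-parameter unitary conjugation, you simply choose $X_t=t(I_n\oplus(-I_n))$ so the anti-diagonal block appears immediately, whereas the paper first recovers (3) for commutators and then specializes.

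The genuine difference is in (1)$\Rightarrow$(2). The paper proceeds in three reduction steps: a polar decomposition to make $B$ Hermitian, a symmetrization via $R=\bm{cc}0&I\\I&0\em$ to force $A_1=A_2$, and finally the rotation $Z=\tfrac{1}{\sqrt2}\bm{cc}I&I\\-I&I\em$ to block-diagonalize as $(A_1-B)\oplus(A_1+B)$ and apply (1) in $\cH(n)$. Your route is shorter: apply (1) directly in $\cH(2n)$ to the pair $(A,\,VAV)$ with $V=I_n\oplus(-I_n)$, obtaining $2\,(s(B),s(B))\da\prec_w(\sprm(A),\sprm(A))\da$, and then read off $2\,s(B)\prec_w\sprm(A)$ from the partial sums at even indices. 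This is exactly the ``doubling-and-halving'' trick already used in the proof of Theorem~\ref{vale con 2 gral}; you have recycled it here in place of the paper's structural reductions. Your argument is more economical, at the mild cost of invoking (1) at size $2n$ (hence $\sprm$ of a $4n\times 4n$ matrix), while the paper stays at size $n$ but works harder. Both are valid.
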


\proof 
1 $\implies$ 2: 
Let $B =U |B| $ be the polar decomposition of $B$, with $U\in \cU(n)$. 
If 
$$
W= \bm{cc}U&0\\0&I\em \in \cU(2n) \peso{then}
W^*AW = \bm {cc}U^*A_1U& U^*B \\ B^*U &A_2\em  = \bm {cc}U^*A_1U& |B| \\ |B| &A_2\em \ .
$$
Since $\spr(A)=\spr(W^*AW)$ and $s(B)=s(|B|)$, in order to show Eq. 
\eqref{con 2 bis} we can assume that $B\in \cH(n)$. 
In this case, taking  $R=\bm{cc}0&I\\I&0\em\in  \cU(2n)\cap\cH(2n)$, we have that
$$
RAR 
= \bm {cc}A_2& B\\ B &A_1\em 
\implies
\frac{A+RAR}{2}=\bm {cc}\frac{A_1+A_2}2 &B\\ B &\frac{A_1+A_2}2\em  \ . 
$$
By Weyl inequality (item 1 in Theorem \ref{teo ah}),  
$$\la\left(\frac{A+RAR}{2}\right) \prec \frac{\la(A)+\la(RAR)}2=\la(A)
\stackrel{\eqref{mayo spr}}{\implies} 
\sprm(\frac{A+RAR}{2})\prec_w\sprm(A)\,.$$ Therefore, 
in order to show Eq. \eqref{con 2 bis} we can assume that $B\in \cH(n)$ and $A_1=A_2\,$.

\pausa
Take now $Z = \frac 1{\sqrt2}\bm{cc}I&I\\-I&I\em\in \cU(2n)$. Since now 
$A= \bm {cc}A_1&B\\ B &A_1\em$, then
\beq\label{ZAZ}
Z^*AZ = 
\bm{cc}A_1-B&0\\0&A_1+B\em 
\py \sprm(Z^*AZ)=\sprm(A)\ .
\eeq
Hence $2s(B) = s\big(\, [A_1-B]-[A_1+B]\,\big) 
\stackrel{\eqref{Peron vuelve}}{\prec_w} \sprm  (\, [A_1-B]\oplus [A_1+B]\,\big)  
\stackrel{\eqref{ZAZ}}{=} \sprm(A)$.

\pausa We have already shown (see Remarks \ref{rem implicancias 1} and \ref{rem implicancias 2}) 
that 2 $\implies$ 3 $\implies$ 4.

\pausa 4 $\implies$ 3: We first consider the case $A_1=A_2=A$ and $X\in\cH(n)$. In this case we consider $D(t)=A-e^{-i\,t\,X}\,A\,e^{i\,t\,X}$, for $t\in [0,1]$.
Then, $D(\cdot)$ is a smooth function such that $D(0)=0$ and $D'(0)=-i\,(AX-XA)$.
Hence, by Weyl's inequality, we have that 
$$
s(t\,(AX-XA))= s(A-e^{-i\,t\,X}\,A\,e^{i\,t\,X})+O(t)\peso{with}\lim_{t\rightarrow 0^+}\frac{O(t)}{t}=0\,.
$$
Using Eq. \eqref{conj bis2} we now see that 
$$s(AX-XA)\prec_w s(X)\  \sprm(A\oplus A)+\frac{O(t)}{t}\,.$$
Then 3. follows by taking the limit $t\rightarrow 0^+$, when $A_1=A_2$ and $X\in\cH(n)$. 
 For the general case, consider
$$A =\bm{cc}A_1 &0\\0&A_2\em\in \cH(2n) \py \hat X=
\bm{cc}0&X\\X^*&0\em \in \cH(2n)\,.$$
Notice that 
$$
A\,\hat X-\hat X\,A=\bm{cc}0&A_1X-XA_2\\-(A_1X-XA_2)^*&0\em 
$$
Hence, by Proposition \ref{hat trick como en el futbol} and the previous facts,
$$
s(A\,\hat X-\hat X\,A)=(s(A_1X-XA_2)\coma s(A_1X-XA_2))\da\prec_w s(\hat X)\  \sprm(A\oplus A)\,.
$$ Notice that Eq. \eqref{conj bis} follows from the previous submajorization relation, since
$$
s(\hat X)=(s(X)\coma s(X))\da \py 
\sprm(A\oplus A)=(\sprm(A_1\oplus A_2)\coma \sprm(A_1\oplus A_2))\da \ .
$$ 
Since 3 $\implies$ 1 (by taking $X=I$) we see that 1 -- 4 are equivalent. 
We have shown (in the proof of Theorem \ref{angsprm}) that 2 $\implies$ 5. Thus, we are left to show that 5 $\implies$ 2. 
Indeed, fix $A\in\cH(2n)$ as in 2. Let $\cS=\C^n\oplus 0\subset \C^n\oplus \C^n$ and let $\cT(t)=e^{i\,t\,A}\cS$, for $t\in [0,1]$. Let 
$$
S=\bm{cc}I_n\\ 0\em\ , \ S_\perp=\bm{cc}0\\ I_n\em\in M_{2n,n}(\C) \py T(t)=e^{i\,t\,A}S 
\peso{for} t\in [0,1]\,.$$
Hence, by Eq. \eqref{angles}, in this case we have that
 $$\sin(\Theta(\cS\coma \cT(t)))=s(T(t)^*\,S_\perp)=s(S\,e^{i\,t\,A}\,S_\perp)\,.$$
Since $T^*(0)\,S_\perp=0$ and $\frac{d}{dt}(T^*(t)\,S_\perp)|_{t=0}= i\,S\,A\,S_\perp=i\,B\in \cM_n(\C)$
we get that 
$$
t\,s(B)= \sin(\Theta(\cS\coma \cT(t)))+O_1(t)\peso{with}\lim_{t\rightarrow 0^+} \frac{O_1(t)}{t}=0\,.$$
On the other hand, since $\Theta(\cS\coma \cT(t))$ is a continuous function of $t\in[0,1]$ such that $\Theta(\cS\coma \cT(0))=0$, then we have that 
$$
\sin(\Theta(\cS\coma \cT(t)))=\Theta(\cS\coma \cT(t))+O_2(t)\peso{with} \lim_{t\rightarrow 0^+} 
\frac{O_2(t)}{t}=0\,.$$
The previous facts together with \eqref{eq conjp3equiv} in 5. show that 
$$
t\,s(B)\prec_w \frac t 2 \, \sprm(A)+O_1(t)+O_2(t)\implies 2\, s(B)\prec_w \sprm(A)+\frac{2}{t} \,(O_1(t)+O_2(t))\,.
$$ Then 2. follows from the previous inequality, by taking the limit $t\rightarrow 0^+$.
\qed

\begin{rem}
We point out that item 2 of Theorem \ref{teo muchas equivalencias} 
implies Theorem \ref{vale con 2 gral} in its general form (where the block $B$ can be rectangular). This follows from Eq. \eqref{comprimido} in Remark \ref{rem cosas elemen sobre spread} (the details are left to the reader). \EOE
\end{rem}

\subsection{Concluding remarks}

We have developed several aspects of the spectral spread of Hermitian matrices introduced in \cite{AKFEM}, which is a natural vector valued measure of the dispersion of the spectra. We have also connected our work with well established research topics in matrix analysis. We have obtained several inequalities involving the spectral spread; in particular, we have obtained sharp inequalities for  generalized commutators of the form $A_1X-XA_2$, for Hermitian matrices $A_1,\,A_2\in \cH(n)$ and arbitrary
$X\in \matn$. Generalized commutators appear, in a natural way, as derivative vectors of matrix-valued smooth curves. We expect that our results will have applications in matrix perturbation bounds for Hermitian matrices, obtained from a (differential) geometrical perspective. Indeed, in \cite{MSZ2} we have already applied the results herein and obtained some inequalities related to the bounds in Eq. \eqref{eq intro conjK} (see \cite{AKFEM}) using a geometrical approach. We point out that matrix sensitivity problems related with the variation of the eigenvalues and eigenspaces of (perturbations of) Hermitian matrices have applications in Machine Learning (tracking changes in data, see \cite{Bis}) while the bounds in the variation of angles between subspaces are of interest in Quantum Computing theory (see \cite{NiCh}). 

\medskip

\pausa {\bf Acknowledgments}. We would like to thank Professor X. Zhan and the anonymous reviewers for several useful suggestions that improved the exposition of the results in this work.

\section{Appendix}\label{sec appendixity}
\pausa
Here we collect several well known results about majorization, used throughout our work.
For detailed proofs of these results and general references in majorization theory see \cite{bhatia,HJ}. We begin with the Weyl's inequalities for singular values: 

\begin{teo}\label{teo ag}\rm Let $C,\,D\in \matn$. Then,
\ben
\item Weyl's additive inequality: $s(C+D)\prec_w s(C)+s(D)$;
\item Weyl's multiplicative inequality: $s(CD)\prec_w s(C)\, s(D)$.
 \qed\een
\end{teo}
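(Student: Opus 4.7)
My plan is to reduce both inequalities to elementary properties of the Ky Fan $k$-norms. For $A\in\matn$ and $k\in\I_n$, set $\|A\|_{(k)}=\suml_{i=1}^{k} s_i(A)$; the variational characterization
$$ \|A\|_{(k)}=\max\,\{\,|\tr(V^*AU)|\,:\,U,V\in\cM_{n,k}(\C)\coma U^*U=V^*V=I_k\,\}\,, $$
which is an immediate consequence of the singular value decomposition, makes it evident that $\|\cdot\|_{(k)}$ is a norm on $\matn$.

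For item 1, I would simply observe that $s(C)+s(D)$ is already arranged in non-increasing order (as the entry-wise sum of two non-increasing vectors). The triangle inequality for $\|\cdot\|_{(k)}$ then gives
$$ \suml_{i=1}^{k} s_i(C+D) \,=\, \|C+D\|_{(k)} \,\le\, \|C\|_{(k)}+\|D\|_{(k)} \,=\, \suml_{i=1}^{k} \bigl(s(C)+s(D)\bigr)_i \peso{for each} k\in\I_n, $$
which is exactly the weak majorization $s(C+D)\prec_w s(C)+s(D)$.

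For item 2, the plan has two steps. First, establish the log-majorization
$$ \prodl_{i=1}^{k} s_i(CD) \,\le\, \prodl_{i=1}^{k} s_i(C)\,s_i(D) \peso{for every} k\in\I_n $$
via $k$-th exterior powers: since $\wedge^k(CD)=(\wedge^k C)(\wedge^k D)$ on $\wedge^k \C^n$, and the operator norm of $\wedge^k A$ equals $\prodl_{i=1}^{k} s_i(A)$ (because the singular values of $\wedge^k A$ are the products $s_{i_1}(A)\cdots s_{i_k}(A)$ with $i_1<\cdots<i_k$), the displayed bound follows from submultiplicativity of the operator norm. Second, invoke the classical passage from log-majorization to weak majorization: for non-negative vectors $a,b$ in non-increasing order, if $\prodl_{i=1}^{j} a_i\le \prodl_{i=1}^{j} b_i$ for all $j$, then $\suml_{i=1}^{k} \varphi(a_i)\le \suml_{i=1}^{k} \varphi(b_i)$ for every convex non-decreasing $\varphi$. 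Taking $\varphi(t)=t$ and noting that $s(C)\,s(D)$ is automatically in non-increasing order yields $s(CD)\prec_w s(C)\,s(D)$.

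The only genuine step of substance is the exterior-power computation used for the log-majorization; the implication log-maj $\implies$ weak-maj is a standard majorization fact (see \cite{bhatia,HJ}), and item 1 is a one-line consequence of the triangle inequality once the ordering of $s(C)+s(D)$ is noted. I expect no real obstacle beyond organizing these ingredients cleanly.
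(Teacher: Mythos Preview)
Your proposal is correct and follows the standard textbook arguments (Ky Fan norms for the additive inequality, exterior powers plus ``log-majorization $\Rightarrow$ weak majorization'' for the multiplicative one). Note, however, that the paper does not actually prove this theorem: it appears in the Appendix as a quoted background result with a bare \textsc{qed} and references to \cite{bhatia,HJ}, so there is no ``paper's own proof'' to compare against---your outline is essentially the proof one finds in those references.
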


\begin{teo}\label{teo ah} \rm Let $C,\, D\in \matsa$. Then,
\ben
\item Weyl's additive inequalities (for eigenvalues): 
\ben
\item 
$\lambda(C+ D)\prec\lambda(C)+\lambda(D)$;
\item  if $C\le D$ then $\la(C) \leqp \la(D)$; 
\een
\item 
Lidskii's additive inequality: $\lambda(C)-\lambda(D)\prec \lambda(C-D)\prec\lambda(C)-\lambda^{\uparrow}(D)$;
\item $|\la(C)-\la(D)|\prec_w s(C-D)$;
\item Let $\cP=\{P_j\}_{j=1}^r$ be a system of projections (i.e. they are mutually orthogonal projections on $\C^d$ 
such that $\sum_{i=1}^r P_i=I$). If 
\beq\label {pinch}
\cC_{\cP}(D)\igdef \sum_{i=1}^r P_i\,D\, P_i  \implies \lambda(\cC_{\cP}(D))\prec \lambda(D) 
\ . 
\eeq
\QED
\een 
\end{teo}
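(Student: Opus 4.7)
Since these are all classical facts of majorization theory, the plan is to derive them uniformly from the Ky Fan maximum principle: for any $A\in\matsa$ and $k\in\I_n$,
\[
\sum_{i=1}^k \la_i(A)=\max\{\tr(PA)\ :\ P\in\matsa \text{ is an orthogonal projection with } \rk P=k\}=\max_V \tr(V^*AV),
\]
where $V$ ranges over isometries $\C^k\to\C^n$, together with the Hardy--Littlewood--P\'olya principle that $x\prec y$ implies $\phi(x)\prec_w \phi(y)$ for every convex $\phi:\R\to\R$ applied entrywise.

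For item 1(a), fix a rank-$k$ projection $P$; then $\tr(P(C+D))=\tr(PC)+\tr(PD)\le\sum_1^k\la_i(C)+\sum_1^k\la_i(D)$, and taking the supremum over $P$ yields the weak majorization. The case $k=n$ is an equality of traces, promoting $\prec_w$ to $\prec$. Item 1(b) is immediate from Courant--Fischer: if $C\le D$, then $\langle V^*CVx,x\rangle\le\langle V^*DVx,x\rangle$ for every isometry $V$ and vector $x$, hence $\la_i(C)\le\la_i(D)$ coordinatewise. For item 2 (Lidskii), I would prove only the second inequality, since the first follows by the substitution $(C,D)\mapsto(C-D,-D)$. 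Choosing $P$ to be the rank-$k$ spectral projection of $C-D$ onto its top $k$ eigenvalues and writing $C-D=C+(-D)$,
\[
\sum_{i=1}^k \la_i(C-D)=\tr(P(C-D))=\tr(PC)-\tr(PD)\le \sum_{i=1}^k \la_i(C)+\sum_{i=1}^k \la_i(-D)=\sum_{i=1}^k \la_i(C)-\sum_{i=1}^k \la\ua_i(D),
\]
and the $k=n$ case is again equality by traces.

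For item 3, apply the convex function $\phi(t)=|t|$ via Hardy--Littlewood--P\'olya to the Lidskii majorization $\la(C)-\la(D)\prec \la(C-D)$: this yields $|\la(C)-\la(D)|\prec_w |\la(C-D)|$, and since $C-D\in\matsa$ the decreasing rearrangement of $|\la(C-D)|$ equals $s(C-D)$. For item 4 (pinching), the key step is to realize $\cC_\cP$ as a uniform average over a finite family of unitary conjugations: with $\omega=e^{2\pi i/r}$ and $U_s=\sum_{j=1}^r \omega^{sj} P_j\in\cU(n)$ for $s\in\I_r$, a short computation using $P_jP_\ell=\delta_{j\ell}P_j$ and $\sum_{s=1}^r \omega^{s(j-\ell)}=r\,\delta_{j\ell}$ shows that $\frac{1}{r}\sum_{s=1}^r U_s D U_s^*=\cC_\cP(D)$. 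Then for any rank-$k$ projection $Q$,
\[
\tr(Q\,\cC_\cP(D))=\frac{1}{r}\sum_{s=1}^r \tr((U_s^* Q U_s)\,D)\le \sum_{i=1}^k \la_i(D),
\]
so the Ky Fan supremum over $Q$ gives $\sum_1^k \la_i(\cC_\cP(D))\le \sum_1^k \la_i(D)$, and trace-preservation of $\cC_\cP$ promotes this to $\prec$. The only non-routine step is the unitary-average representation of the pinching map in item 4; once this identity is in hand, all four items flow transparently from a single application of Ky Fan's principle.
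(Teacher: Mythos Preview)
The paper does not actually prove this theorem: it sits in the Appendix with a bare \QED\ and a pointer to \cite{bhatia,HJ}. So there is no argument to compare against, only correctness to check. Your treatments of items 1, 3 and 4 are standard and fine (the unitary-average realization of the pinching is the usual one). The gap is in item 2.

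Your reduction of the first (Lidskii) inequality to the second via the substitution $(C,D)\mapsto(C-D,-D)$ does not deliver Lidskii. Plugging $C'=C-D$, $D'=-D$ into $\la(C'-D')\prec\la(C')-\la^{\uparrow}(D')$ gives
\[
\la(C)\prec\la(C-D)+\la(D),
\]
which in partial sums is
\[
\sum_{i=1}^k\big[\la_i(C)-\la_i(D)\big]\le\sum_{i=1}^k\la_i(C-D).
\]
This is only the bound $\sum_{i\in I}[\la_i(C)-\la_i(D)]\le\sum_{i=1}^k\la_i(C-D)$ for the \emph{particular} index set $I=\{1,\dots,k\}$. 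Lidskii's inequality $\la(C)-\la(D)\prec\la(C-D)$ demands it for \emph{every} $k$-subset $I\subset\I_n$, equivalently for the decreasing rearrangement of $\la(C)-\la(D)$, which in general is not $(\la_i(C)-\la_i(D))_{i\in\I_k}$. That strengthening is precisely what separates Lidskii from Weyl; it is not reachable by the plain Ky Fan bound you are using and requires an extra tool such as Wielandt's minimax characterization of $\sum_{i\in I}\la_i(\cdot)$ for arbitrary $I$ (see \cite[III.4]{bhatia}). Since your proof of item 3 rests on Lidskii via $|\cdot|$, it inherits the same gap.
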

\pausa
In the next result we describe several elementary but useful properties of (sub)majorization between real vectors.

\begin{lem}\label{lem submaj props1}\rm 
Let $x\coma y\coma  z\coma w \in \R^k$. Then,
\ben
\item $x\da + y\ua\prec x+y\prec x\da+y\da$;
\item If $x\prec_w y$ and $y,\, z\in(\R^k)\da$ then $x+z\prec_w y+z$;
\item[3.] Moreover, if $z\coma w\in (\R^k)\da$,  $x\prec z$ and $y\prec w$ then $ x+ y\prec z+w$. 
\een
If we assume further that $x\coma y\coma  z\in \R_{\geq 0}^k$ then,
\ben
\item[4.] $x\da\, y\ua\prec_w x\, y\prec_w x\da\, y\da$;
\item[5.] If $x\prec_w y$ and $y,\, z\in (\R_{\geq 0}^k)\da$ then $x\, z\prec_w y\, z$.
\qed

\een
\end{lem}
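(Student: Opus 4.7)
\bigskip

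\noindent\emph{Proof proposal.} All five assertions are classical consequences of the Hardy--Littlewood rearrangement philosophy. My plan is to treat the additive statements (items 1, 2, 3) together through item 1, and then treat items 4 and 5 in parallel using the multiplicative analog. Two standard tools underlie everything: the Ky Fan maximum principle $\sum_{i=1}^j a\da_i=\max_{|F|=j}\sum_{i\in F}a_i$, and the elementary ``two-point swap'' for a convex function $\phi$: if $a\ge b$ and $d\ge c$, then
\[
\phi(a+c)+\phi(b+d)\ \le\ \phi(a+d)+\phi(b+c),
\]
which is the statement that the increments of $\phi$ over intervals of equal length $d-c$ are non-decreasing in the starting point.

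For item 1, the right inequality $x+y\prec x\da+y\da$ is almost immediate from Ky Fan: for every $j\in\I_k$,
\[
\sum_{i=1}^{j}(x+y)\da_i=\max_{|F|=j}\Bigl(\sum_{i\in F}x_i+\sum_{i\in F}y_i\Bigr)\le \sum_{i=1}^{j}x\da_i+\sum_{i=1}^{j}y\da_i,
\]
with equality at $j=k$ giving the trace condition. The left inequality $x\da+y\ua\prec x+y$ is the harder one; I would reduce it via Hardy--Littlewood--P\'olya--Rado to the statement that, among all permutations $\sigma$, the sum $\sum_i\phi(x\da_i+y\da_{\sigma(i)})$ is minimized when $\sigma$ is the order-reversing permutation (so $y\da_{\sigma(i)}=y\ua_i$). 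This is proved by adjacent swaps: whenever $i<j$ but $\sigma(i)<\sigma(j)$, applying the swap inequality above to $a=x\da_i,\,b=x\da_j,\,d=y\da_{\sigma(i)},\,c=y\da_{\sigma(j)}$ shows that reversing the pair does not increase the sum. Since this operation brings $\sigma$ closer to the reversing permutation, the minimum is attained there.

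Items 2 and 3 become direct corollaries. The decisive observation is that the sum of two non-increasing vectors is itself non-increasing, so partial sums become transparent: if $y,z\in(\R^k)\da$, then $(y+z)\da=y+z$ and $\sum_{i=1}^{j}(y+z)_i=\sum_{i=1}^{j}y_i+\sum_{i=1}^{j}z_i$. For item 2, I would chain item 1 (applied to $x$ and $z$) with the hypothesis $x\prec_w y$ to obtain
\[
\sum_{i=1}^{j}(x+z)\da_i\le \sum_{i=1}^{j}x\da_i+\sum_{i=1}^{j}z_i \le \sum_{i=1}^{j}y_i+\sum_{i=1}^{j}z_i.
\]
Item 3 then follows by applying item 1 to $(x+y)\da\prec x\da+y\da$, adding the hypotheses $x\prec z$ and $y\prec w$ entrywise on partial sums (using that $z$ and $w$ are already sorted), and noting that the trace condition is inherited from the two original majorizations.

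For items 4 and 5 the same strategy applies with sums replaced by products, once we observe the multiplicative swap identity: for $a\ge b\ge0$ and $d\ge c\ge0$, the trivial inequality $(a-b)(d-c)\ge 0$ yields $ad+bc\ge ac+bd$, and moreover $ad\ge\max\{ac,bd\}$. Hence $\{ac,bd\}\prec_w\{ad,bc\}$ as multisets, which upgrades (via increasing convex functions on $\R_{\ge 0}$) to the statement that $\sum\phi(x\da_iy\da_{\sigma(i)})$ is, over all permutations $\sigma$, maximized by $\sigma=\mathrm{id}$ and minimized by the reversing permutation. Combined with the Ky Fan principle exactly as in item 1 this gives item 4; item 5 then follows from item 4 in the same way that item 2 followed from item 1, using that the product of two non-increasing non-negative vectors is non-increasing. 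The only really substantive step is the swap argument for the left inequality of item 1 (and its multiplicative counterpart); the remaining items are routine once that backbone is in place.
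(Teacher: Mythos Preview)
The paper does not actually supply a proof of this lemma: it appears in the Appendix with a \qed\ inside the statement and the blanket remark that ``for detailed proofs of these results and general references in majorization theory see \cite{bhatia,HJ}.'' So there is no in-paper argument to compare against; your task reduces to whether the proposal stands on its own.

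It does, and the route you take (Ky Fan maximum principle plus the two-point swap/rearrangement argument, then the HLP characterization of $\prec_w$ via non-decreasing convex test functions) is exactly the classical one found in the references the paper cites. Two small comments. First, in item~4 the phrase ``combined with the Ky Fan principle exactly as in item~1'' is a little misleading: the multiplicative right inequality does not split over $F$ the way the additive one does, and what actually carries the argument is the convex-function characterization you invoke just before. Second, item~5 is not quite ``the same way that item~2 followed from item~1'': after reaching $\sum_{i=1}^{j}x\da_i z_i$ you still need $\sum_{i=1}^{j}x\da_i z_i\le\sum_{i=1}^{j}y_i z_i$, which uses Abel summation together with $z_i-z_{i+1}\ge 0$ and $z_j\ge 0$; in item~2 the corresponding step was a bare inequality of partial sums. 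This is routine, but it is the one place where ``same way'' hides a genuine extra ingredient.
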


\begin{rem}\label{remxleqy}
Let $x,y\in \R^k$. If $x\leqp y$ then, $$x^{\downarrow}\leqp y^{\downarrow}\  \text{ and } \ x\prec_w y \, .$$
\end{rem} 

\pausa
Recall that given $f: I \rightarrow \R$, where $I\subset \R$ is an interval, and $z=(z_i)_{i\in\I_k}\in I^k$ we denote $f(z)=(f(z_i))_{i\in\I_k}\in\R^k$.
\begin{rem}\label{convfunction}
Let $I\subset \R$ be an interval and let  $f: I \rightarrow \R$ be a convex function. Then, 
\ben
\item if $x,\, y\in I^k$ satisfy $x\prec y$ then $f(x)\prec_w f(y)$. 
\item If   $x\prec_w y$ but $f$ is further non-decreasing in $I$, then $f(x)\prec_w f(y)$. 
\EOE\een
\end{rem}

\pausa
Recall that a norm $N$ in $\matn$ is unitarily invariant (briefly u.i.n.) if 
$\nui{UAV}=\nui{A}$, for every $A\in\matn$ and $U,\, V\in\mathcal{U}(n)$.
Well known examples of u.i.n. are the spectral norm $\|\cdot\|_{sp}$ and the $p$-norms $\|\cdot\|_p\,$, for $p\geq 1$.

\begin{rem}\label{Domkyfan}\rm
It is well known that (sub)majorization relations between singular values of matrices are intimately related 
with inequalities with respect to u.i.n's. Indeed, given $A,\, B\in\matn$ the following statements are equivalent:
\ben 
\item For every u.i.n. $N$ in $\matn$
we have that $N(A)\leq N(B)$.
\item $s(A)\prec_w s(B).$ \EOE
\een 
\end{rem}

\begin{pro}\label{hat trick como en el futbol}\rm
Let $1\le k<n$,  $E\in \cM_{k\coma n-k}(\C)$ and 
$\hat{E}=\begin{pmatrix} 0&E\\ E^*&0
\end{pmatrix}\in \H(n)$. Then 
$$
\la(\hat E)= \big(\, s(E)\coma -s(E^*)\,\big)\da=\big(\,s(E)\coma -s\ua(E)\, \big)
\in(\R_{\geq 0}^n)\da\ ,
$$
with some zeros at the middle, in the rectangular case. \QED
\end{pro}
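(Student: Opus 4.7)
\medskip

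\noindent\textbf{Proof plan for Proposition \ref{hat trick como en el futbol}.}

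The plan is to diagonalize $\hat{E}$ using a singular value decomposition of $E$ and then permute the resulting matrix into a block-diagonal form whose eigenvalues are transparent. First I would take an SVD $E=U\Sigma V^*$, where $U\in\cU(k)$, $V\in\cU(n-k)$, and $\Sigma\in\cM_{k,n-k}(\C)$ is the ``rectangular diagonal'' matrix whose $(i,i)$ entries, for $i\in\I_{\min\{k,n-k\}}$, are $s_i(E)$. Conjugating $\hat{E}$ by the block-diagonal unitary $W=U\oplus V\in\cU(n)$ yields
$$
W^*\hat{E}\,W=\begin{pmatrix} 0 & U^*EV \\ V^*E^*U & 0\end{pmatrix}=\begin{pmatrix} 0 & \Sigma \\ \Sigma^* & 0\end{pmatrix},
$$
so since $\la(\hat E)=\la(W^*\hat E W)$, I may assume $E=\Sigma$.

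Next I would treat this reduced matrix by a simple permutation argument. Without loss of generality suppose $k\le n-k$; the case $k\ge n-k$ is symmetric (and follows by transposition, noting that $s(E^*)$ and $s(E)$ share the same nonzero entries). With $k\le n-k$, write $\Sigma=[\,D\ \vert\ 0\,]$ where $D=\mathrm{diag}(s_1(E),\ldots,s_k(E))\in\cM_k(\C)$. The nontrivial rows and columns of $\hat E$ corresponding to the indices $\{1,\ldots,k\}$ (first block) and $\{k+1,\ldots,2k\}$ (from the second block) form, after a permutation $P$ that pairs index $i$ with index $k+i$, a direct sum of $k$ two-by-two antidiagonal blocks
$$
\begin{pmatrix} 0 & s_i(E) \\ s_i(E) & 0\end{pmatrix},\qquad i\in\I_k,
$$
while the remaining $n-2k$ coordinates (coming from the last $n-2k$ columns/rows of the $(n-k)$-block) form a zero diagonal block $0_{n-2k}$.

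Each two-by-two block has eigenvalues $\pm s_i(E)$, and the zero block contributes $n-2k$ zero eigenvalues. Collecting everything, the spectrum of $\hat E$ is
$$
\{\pm s_i(E):i\in\I_k\}\cup\{0,\ldots,0\}\quad(n-2k\ \text{zeros}),
$$
which, when rearranged in non-increasing order, is exactly $(s(E),-s(E^*))\da$, where the middle of this vector absorbs the $n-2k$ zeros (this accounts for the ``zeros at the middle'' in the rectangular case). The identity $s^\uparrow(E)=(s(E^*))\da$ reversed gives the second stated form. The main bookkeeping obstacle is simply keeping track of the vector lengths ($s(E)\in\R^{n-k}$ and $s(E^*)\in\R^k$ under the convention of the paper) and checking that the zero padding lines up; once the $2\times 2$ block decomposition is in place, the rest is immediate.
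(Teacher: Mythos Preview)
Your argument is correct and is the standard route: reduce via the SVD to the case $E=\Sigma$, then permute into a direct sum of $2\times 2$ antidiagonal blocks (plus a zero block in the rectangular case) and read off the eigenvalues $\pm s_i(E)$. The bookkeeping about vector lengths is fine; note only that the paper's second displayed form $(s(E),-s^\uparrow(E))$ is written somewhat informally and relies on the ``zeros at the middle'' caveat to get the length right, exactly as you observed.

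As for comparison with the paper: there is nothing to compare. Proposition~\ref{hat trick como en el futbol} sits in the Appendix among ``well known results'' cited from \cite{bhatia,HJ} and is stated with a bare \textsc{qed}; the paper does not supply a proof. Your SVD-plus-permutation argument is precisely the textbook proof one finds in those references, so it is in full agreement with what the authors intend the reader to consult.
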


{\scriptsize
}

\end{document}